
\documentclass{daj}
\usepackage{amsthm,amsmath,amssymb,amsfonts,nicefrac}

\dajAUTHORdetails{%
  title = {Effective Bounds for Restricted $3$-Arithmetic Progressions in $\mathbb{F}_p^n$}, 
  author = {Amey Bhangale, Subhash Khot, and Dor Minzer},
  plaintextauthor = {Amey Bhangale, Subhash Khot, and Dor Minzer},
    %
    %
    %
    %
    %
   %
  keywords = {analysis of boolean functions, extremal combinatorics, density increment.},
}   

\dajEDITORdetails{%
   year={2024},
   number={16},
   received={25 August 2023},   
   published={20 December 2024},  
   doi={10.19086/da.125858},       
}   

\newtheorem{thm}{Theorem}[section]

\newtheorem{lemma}[thm]{Lemma}

\newtheorem{claim}[thm]{Claim}

\newtheorem{definition}[thm]{Definition}
\newtheorem{remark}[thm]{Remark}

\newtheorem{fact}[thm]{Fact}

\newcommand\E{\mathop{\mathbb{E}}}
\newcommand\card[1]{\left| {#1} \right|}
\newcommand\sett[2]{\left\{ \left. #1 \;\right\vert #2 \right\}}

\newcommand\set[1]{{\left\{ #1 \right\}}}
\newcommand\Prob[2]{{\Pr_{#1}\left[ {#2} \right]}}

\newcommand\Expect[2]{{\mathop{\mathbb{E}}_{#1}\left[ {#2} \right]}}

\newcommand\norm[1]{\| #1 \|}

\newcommand\skipi{{\vskip 10pt}}
\newcommand\inner[2]{\langle{#1},{#2}\rangle}
\newcommand\eps{\varepsilon}

\begin{document}

\begin{frontmatter}[classification=text]

\title{Effective Bounds for Restricted $3$-Arithmetic Progressions in $\mathbb{F}_p^n$} 

\author[pgom]{Amey Bhangale\thanks{Supported by the Hellman Fellowship Award.}}
\author[laci]{Subhash Khot\thanks{Supported by
		the NSF Award CCF-1422159, NSF CCF award 2130816, and the Simons Investigator Award.}}
\author[andy]{Dor Minzer\thanks{Supported by a Sloan Research
		Fellowship, NSF CCF award 2227876 and NSF CAREER award 2239160.}}

\begin{abstract}
For a prime $p$, a restricted arithmetic progression in $\mathbb{F}_p^n$ is a triplet of vectors $x, x+a, x+2a$ in which the common difference $a$ is a non-zero
  element from $\{0,1,2\}^n$. What is the size of the largest $A\subseteq \mathbb{F}_p^n$ that is free of restricted arithmetic progressions?
  We show that the density of any such a set is at most $\frac{C}{(\log\log\log n)^c}$, where $c,C>0$ depend only on $p$,
  giving the first reasonable bounds for the density of such sets. Previously, the best known bound was $O(1/\log^{*} n)$,
  which follows from the density Hales-Jewett theorem.
\end{abstract}
\end{frontmatter}

\section{Introduction}
Roth's theorem~\cite{roth1953certain} is one of the cornerstones of extremal combinatorics,
asserting that a set of integers $A\subseteq [n]$ that does not contain an arithmetic progression of size $3$,
namely a triplet of the form $x, x+a, x+2a$, must have density which vanishes with $n$; in particular, Roth
showed that $\card{A}\leq O\left(\frac{n}{\log\log n}\right)$. Much effort has gone into improving upon this result quantitatively (that is, showing
that such set $A$ must be in fact much smaller than what was proved by Roth), extending it to longer length progressions,
as well as proving variants of it in different settings.
Currently, it is known~\cite{kelley2023strong} that such set of integers $A$ may be of size at most
$e^{-(\log n)^{c}} n$ for an absolute constant $c>0$.
As for longer progressions, Szemer\'{e}di's theorem~\cite{szemeredi1975sets} is a well known
(and very impactful) generalization of this result to arbitrarily long arithmetic progressions,
showing that for every $k$ there is a vanishing function $\alpha_k\colon\mathbb{N}\to(0,1)$ such that a set of integers $A\subseteq [n]$
with no arithmetic progressions of length $k$ has size at most $\alpha_k(n) n$. The quantitative bound established by Szemer\'{e}di's proof
is quite weak, and a reasonable one was only proved later on by Gowers'~\cite{gowers2001new} in his highly influential work introducing uniformity norms.
Other variants of this problem that received considerable attention replace the set of integers by a different set (say primes~\cite{green2008primes}
or finite fields~\cite{meshulam1995subsets}), or further restricting the common difference of the progression to be from a specific set~\cite{prendiville2017quantitative,peluse2019quantitative}.

One may ask similar questions in the finite field model. Indeed, following Roth's theorem, Meshulam has shown~\cite{meshulam1995subsets}
that a subset $A\subseteq \mathbb{F}_p^n$ with no arithmetic progression may have size which is at most $O\left(\frac{p^n}{\sqrt{n}}\right)$.
Like Roth, Meshulam's argument is Fourier analytic and proceeds by a density increment argument, in which he shows that a set $A$ which is
free of arithmetic progressions of size $3$ must have considerably larger density inside some hyperplane. Until recently, the quantitative
bounds achieved in this finite field model and in the integer model have been progressing at roughly the same rate. Somewhat surprisingly,
it turns out that significantly stronger bounds can be proved in the finite field model using the polynomial method~\cite{ellenberg2017large}.
Specifically, it is now known that for all primes $p\in\mathbb{N}$ there is $\eps>0$ such that a set $A\subseteq \mathbb{F}_p^n$ with no arithmetic progressions of length $3$
can have size at most $(p-\eps)^n$.

The primary topic of this paper is a variant of Roth's theorem in the finite field model, in which
the common difference is restricted to be from a specific set. Along these lines, the most restrictive common difference type one can think of
is for a triplet of the form $x, x+a, x+2a$ where $x\in\mathbb{F}_p^n$ and $a\in\{0,1\}^n\setminus\{\vec{0}\}$. This question has been asked by
H{\k{a}}z{\l}a, Holenstein and Mossel~\cite{Mossel} (in the ``counting version'' asking if a set $A$ of density $\alpha>0$ must contain at least $\beta(\alpha)>0$
fraction of the restricted $3$-AP's), and highlighted by Green~\cite{Green} who also asked for effective bounds for the size of $A$ not containing any such restricted
$3$-APs.

This paper studies a similar question, wherein the common difference is a bit less restricted, and is allowed to be from $a\in\{0,1,2\}^n\setminus\{\vec{0}\}$.
Previously, for $p>3$ the best known quantitative bounds on the measure of a restricted $3$-AP set $A$ are quite poor and stand at $\mu(A)\leq O(1/\log^{*}(n))$
(which follow from the density Hales-Jewett theorem~\cite{polymath2012new}).

While the main result of this paper is purely in additive combinatorics, the authors view the result, as well as the techniques as
a part of their study of the approximability of satisfiable constraint satisfaction problems~\cite{BKMcsp1,BKMcsp2,BKMcsp3,BKMcsp4,GHZ}.

\subsection{Main Result}
Our main result is the following theorem:
\begin{thm}\label{thm:main_el}
 For all primes $p$ there are $c>0$ and $C>0$ such that if $A\subseteq \mathbb{F}_p^n$ is a restricted $3$-AP free set, then
 $\mu(A)\leq \frac{C}{(\log\log\log n)^{c}}$.
\end{thm}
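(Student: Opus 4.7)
The plan is to prove Theorem~\ref{thm:main_el} by a density-increment argument. Given a restricted 3-AP free set $A \subseteq \mathbb{F}_p^n$ of density $\alpha$, the aim is to produce a sub-structure of $\mathbb{F}_p^n$ on which $A$ has density $\alpha + h(\alpha)$ for some modest function $h$, and to iterate until the density would exceed $1$. Write
$$T(A) = \mathbb{E}_{x \in \mathbb{F}_p^n,\, a \in \{0,1,2\}^n}\bigl[1_A(x)\, 1_A(x+a)\, 1_A(x+2a)\bigr].$$
The restricted 3-AP freeness of $A$ forces $T(A) \leq \alpha/3^n$, exponentially smaller than the ``random'' value $\alpha^3$, so the Fourier error terms of $T(A)$ around $\alpha^3$ must be substantial.

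The first step is to Fourier-expand $T(A)$ on $\mathbb{F}_p^n$. Each nontrivial Fourier contribution carries a weight of the form
$$\Phi(\chi) = \prod_{i=1}^n \mathbb{E}_{a \in \{0,1,2\}}\bigl[\omega^{\chi_i a}\bigr],\qquad \omega = e^{2\pi i/p}.$$
The weight $\Phi$ tensorizes across coordinates, and its modulus is bounded by $\rho(p)^{|\mathrm{supp}(\chi)|}$ for some $\rho(p) < 1$ depending only on $p$. Hence only coordinate-sparse characters contribute significantly to the error. Splitting the pair and triple Fourier contributions by support size, I would conclude that $1_A - \alpha$ has non-negligible $\ell^2$-mass on characters supported on at most $s = O_p(\log(1/\alpha))$ coordinates.

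The second step converts this spectral concentration into a structured density increment. A naive pigeonhole over sparse characters costs $\binom{n}{s} p^s = n^{\Theta(\log(1/\alpha))}$ and is far too wasteful. Instead, I would seek a structural lemma tailored to sparse-support spectra: given substantial Fourier mass on such characters, identify a set $S$ of $s' = s'(\alpha,p)$ coordinates and a constant $c_S \in \mathbb{F}_p^S$ such that the restriction of $A$ to the coset $\{x : x_S = c_S\}$ has density at least $\alpha + h(\alpha)$. Simultaneously restricting the common differences to $a_S = 0$ preserves the restricted 3-AP structure inside the resulting sub-cube $\mathbb{F}_p^{n-s'}$, and restricted 3-AP freeness is inherited.

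Iterating, the density climbs while the ambient dimension shrinks, and the bound $\alpha \leq C/(\log\log\log n)^c$ should emerge from balancing the $\mathrm{poly}(1/\alpha)$ iterations needed for density to reach $\Omega(1)$ against the per-step dimension cost. The technical heart will be the second step: a direct Fourier--pigeonhole yields a density increment of size only $\alpha^{1 + \Theta(\log n)}$, far too weak to sustain useful iteration, and recovers only a trivial $1/\log^{*} n$-type bound. Overcoming this presumably requires a regularity-type decomposition aligned with the cube coordinates, converting the sparse $\ell^2$-concentration into a coordinate-aligned density increment of polynomial size at a controlled dimension cost. Making this step go through with the right quantitative parameters is what distinguishes the triple-log scaling of the theorem from the much weaker bound coming from density Hales--Jewett.
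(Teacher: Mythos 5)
Your step 1 contains a genuine error, and it is precisely the obstruction that makes this problem hard. When you Fourier-expand $T(A)$, the constraint from averaging over $x$ forces $\chi_1+\chi_2+\chi_3=0$, and the weight coming from $a\in\{0,1,2\}^n$ is $\prod_i \mathbb{E}_{a}[\omega^{(\chi_{2,i}+2\chi_{3,i})a}]$. This is exponentially small only in the support of the \emph{linear combination} $\chi_2+2\chi_3$, not in the supports of the individual characters. Consequently there are full-weight contributions from pairs of dense characters satisfying $\chi_2=-2\chi_3$ (and then $\chi_1=\chi_3$), and your claimed conclusion --- that a large deviation of $T(A)$ from $\alpha^3$ forces $1_A-\alpha$ to have mass on coordinate-sparse characters --- is false. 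Concretely, for $A=\{x:\sum_i x_i\in S\}$ with $S\subseteq\mathbb{F}_p$ an interval, all nontrivial Fourier mass of $1_A$ sits on characters of full support $n$, yet $T(A)$ deviates from $\alpha^3$ by a constant, because $(x,x+a,x+2a)$ satisfies the linear relation $x-2(x+a)+(x+2a)=0$; this is the ``Abelian embedding'' of the support of the underlying distribution into $\mathbb{F}_p$. The paper's route is built around exactly this issue: instead of a direct Fourier expansion it invokes a stability theorem for 3-wise correlations (Theorem~\ref{thm:stab}), which says the deviation forces $1_A-\alpha$ to correlate with a product of an embedding-product function $\prod_i\chi_i(\sigma(x_i))$ and a low-degree function, and the whole point of the subsequent argument is to neutralize the product-function obstruction that your expansion does not see.

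Even setting that aside, your step 2 is not a proof: you acknowledge that a direct pigeonhole over sparse characters is too lossy and that some ``regularity-type decomposition'' is ``presumably'' needed, but you do not supply it, and this is where all the actual work lies. The paper's mechanism is quite different from fixing a few coordinates and forcing $a_S=0$: it removes the low-degree factor by random restrictions (Lemmas~\ref{lem:rest_to_bump}, \ref{lem:rest_to_correlation}, \ref{lem:find_corr_restrict}), robustifies the correlation under a two-step restriction process (Lemma~\ref{lem:list_decode}), and then performs a ``specialized change of basis'': it pigeonholes coordinates on which the product function uses the same character, groups them into blocks whose size equals the order of the group $H$, and passes to the affine subspaces spanned by the block indicator vectors. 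This simultaneously keeps restricted $3$-APs restricted (Claim~\ref{claim:special_pres_free}) and makes the product function constant on each part, which is what converts the correlation into a density increment of size $\Omega_{\alpha}(1)$ on a subspace of dimension $\Omega_{\alpha}(n)$. None of this machinery, nor any substitute for it, appears in your proposal, so the argument as written does not establish the theorem.
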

\begin{remark}
  A few remarks are in order.
  \begin{enumerate}
    \item It is quite possible that a refinement of our techniques may lead to better bounds, however it seems that our methods will not
    be able to achieve bounds better than $O\left(1/(\log(n))^C\right)$ for a large absolute constant $C>0$. As far as we know, it is consistent with the current state of
    knowledge that any restricted $3$-AP free set may have size at most $(p-\eps)^{n}$, where $\eps=\eps(p)>0$, which is completely out of
    reach of any currently known density increment type approach.
    \item The reason we are only able to deal with common differences from $\{0,1,2\}^n\setminus\{\vec{0}\}$ (as opposed to $\{0,1\}^n\setminus\{\vec{0}\}$) is
     quite technical at nature. It has to do with the strength of a certain stability result we use, and it may be the case that stronger stability results exists
     that will allow arguments along the lines of the current paper to handle the more restricted common differences in $\{0,1\}^n\setminus\{\vec{0}\}$.
  \end{enumerate}
\end{remark}


\subsection{Our Techniques}
In this section, we give a high level overview of our techniques. We begin by introducing the main ``inverse type'' theorem that we use in our proof
that comes from the theoretical computer science community, and then explain how to apply it in our setting (as well as the challenges entailed in it).
\subsubsection{The CSP Stability Result}
Our approach is motivated by a recent stability theorem by the authors~\cite{BKMcsp4}, which was motivated by the theoretical computer science
perspective --- the study of the complexity of satisfiable constraint satisfaction problems. Here, we elaborate on that result without explaining that
motivation, and refer the reader to~\cite{BKMcsp1,BKMcsp2,BKMcsp3,BKMcsp4} for a more thorough discussion on that.

Suppose that $\Sigma,\Gamma$ and $\Phi$ are finite alphabets, and let $\mu$ be a distribution over $\Sigma\times \Gamma\times \Phi$ in which the probability
of each atom is at least $\Omega(1)$. What functions $f\colon \Sigma^n\to[-1,1]$, $g\colon \Gamma^n\to[-1,1]$ and $h\colon \Phi^n\to[-1,1]$ can satisfy
that
\begin{equation}\label{eq:intro1}
\card{\Expect{(x,y,z)\sim \mu^{\otimes n}}{f(x)g(y)h(z)}}\geq \eps,
\end{equation}
where $\eps>0$ is thought of as a small constant, and $n$ as going to infinity? If $f,g$ and $h$ are low-degree functions, then is may certainly be the case;
for example, if all of the alphabets are the same, and $\mu$ has $1-\delta$ of its mass on points of the form $(x,x,x)$, then for functions that have most of
their mass on degrees lower than $1/\delta$, the above expectation is essentially $\E_x[f(x)g(x)h(x)]$, and this can certainly be large in absolute value.
In~\cite{BKMcsp1}, the authors propose that, in a sense, besides low-degree functions, the only other case in which the above expectation may be large
is the case that the support
of $\mu$ admits a non-trivial \emph{Abelian embedding}:
\begin{definition}
  We say a set $P\subseteq \Sigma\times\Gamma\times \Phi$ can be Abelianly embedded if there is an Abelian group $(H,+)$
  and $3$ maps $\sigma\colon \Sigma\to H$, $\gamma\colon \Gamma\to H$ and $\phi\colon \Phi\to H$
  such that $\sigma(x)+\gamma(y)+\phi(z) = 0$ for all $(x,y,z)\in P$. We say an embedding $\sigma, \gamma, \phi$ is non-trivial
  if at least one of these maps is not constant.

  We say $\mu$ can be Abelianly embedded if ${\sf supp}(\mu)$ can be Abelianly embedded.
\end{definition}
With this definition,~\cite{BKMcsp1} hypothesized that if $\mu$ has no non-trivial Abelian embedding, then~\eqref{eq:intro1} can hold
only if $f,g$ and $h$ each have a significant weight on the low-degrees. In fact, they hypothesized that such statement should be true
not only for the $3$-ary case, but also for the $k$-ary case for all $k\in\mathbb{N}$ (where the definition of no Abelian embedding
for $P\subseteq \Sigma_1\times\ldots\times\Sigma_k$ is analogous to the above). They established that this statement is true for $k=3$ for
a special case of measures $\mu$ (via a reduction to a problem in non-Abelian Fourier analysis), and subsequently in~\cite{BKMcsp2} that the statement
is true for $k=3$ for all $\mu$.

Morally, this indeed says that the only contributions to~\eqref{eq:intro1} may come at the presence of Abelian embeddings. However, one would have liked
to say that even if there are embeddings, to achieve~\eqref{eq:intro1} one must design functions that ``use'' these embeddings. This was proved in~\cite{BKMcsp4},
and to state that result we need a definition.
\begin{definition}
  For $i,j\in \{1,2,3\}$, we say that a distribution $\mu$ over $\Sigma_1\times \Sigma_2\times \Sigma_3$ is $\{i,j\}$-connected
  if the bipartite graph $G = (\Sigma_i\cup\Sigma_j, E)$ whose edges are $(a,b)\subseteq \Sigma_i\times \Sigma_j$ if there is $v\in{\sf supp}(\mu)$
  such that $v_i = a$ and $v_j = b$, is connected. We say $\mu$ is pairwise connected if it is $\{i,j\}$-connected for all distinct $i,j\in\{1,2,3\}$.
\end{definition}
With this in mind, the result of~\cite{BKMcsp4} asserts that if $\mu$ is a pairwise connected distribution that
doesn't admit non-trivial Abelian embeddings into $(\mathbb{Z},+)$, and for which~\eqref{eq:intro1} holds,
then there is an $r$ depending only on the alphabet sizes, an Abelian group $(H,+)$ of size at most $r$,
an Abelian embedding $(\sigma,\gamma,\phi)$ of $\mu$ into $(H,+)$,
characters $\chi_1,\ldots,\chi_n\in\hat{H}$ and a function $L\colon \Sigma^n\to\mathbb{C}$ of degree at most $d$ and $2$-norm at most $1$ such that
\begin{equation}\label{eq:intro2}
\card{\Expect{x\sim\mu_x^{\otimes n}}{f(x)\overline{\prod\limits_{i=1}^{n}\chi_i(\sigma(x_i))L(x)}}}\geq \delta.
\end{equation}
Here, $d,\delta$ depend only on $\eps$ (and with reasonable quantitative bounds).
In words, this result asserts that if $f$ satisfies~\eqref{eq:intro1}
for some bounded functions $g$ and $h$, then it must be correlated with a product of an ``embedding product function'' and a low-degree function (which
is a combination of the only two ``obvious'' ways~\eqref{eq:intro1} can be achieved). See Theorem~\ref{thm:stab} for a more precise statement.

\subsubsection{Stability for Restricted $3$-AP Sets}
In this section we explain how to apply the above stability result to prove Theorem~\ref{thm:main_el}.

Thinking of the distribution $\mu$ over $\mathbb{F}_p\times \mathbb{F}_p\times \mathbb{F}_p$ of $(x,x+a,x+2a)$ where $x\sim\mathbb{F}_p$ and $a\sim\{0,1,2\}$
are sampled uniformly, one is tempted to look at the count of restricted $3$-AP's in $A\subseteq\mathbb{F}_p^n$, namely
\[
\Expect{(x,y,z)\sim \mu^{\otimes n}}{1_A(x)1_A(y)1_A(z)},
\]
and argue that if $A$ has no restricted $3$-AP's, then only trivial $3$-AP's (those with $a=0$) contribute to the above expectation, hence it is at most
$3^{-n}$. Thus,
\[
\Expect{(x,y,z)\sim \mu^{\otimes n}}{(1_A-\mu(A))(x)1_A(y)1_A(z)}
\leq 3^{-n} - \mu(A)\Expect{(x,y,z)\sim \mu^{\otimes n}}{1_A(y)1_A(z)},
\]
and as it is natural to expect that $\Expect{(x,y,z)\sim \mu^{\otimes n}}{1_A(y)1_A(z)}$ is significant,
one gets that the left side above is large in absolute value.
This is now an inequality as~\eqref{eq:intro1} for $f = 1_A - \mu(A)$ and $g = h = 1_A$, and the stability result above may thus be applied.
Hence, $f$ must be correlated with a function
as in~\eqref{eq:intro2}. With any luck, the only embeddings of $\mu$ are the trivial linear embeddings into $\mathbb{F}_p$ given
as $\sigma(x) = \gamma(x) = \phi(x) = x$, at which point one would morally get that $f$ has a significant Fourier coefficient.
As in Roth's theorem, there is now hope that one can use this information to reduce the question to the same question but over a denser set and thus proceed by
a density increment argument. There are several issues with this simplistic description:
\begin{enumerate}
  \item First, in the stability result~\eqref{eq:intro2} one gets a correlation with a product functions times a low-degree function $L$, and one cannot simply
  ignore $L$. In other words, the information we get is not really about the Fourier coefficients of $f$.
  \item Second, even if we found a large Fourier coefficient, it is not immediately clear how to actually do the density increment argument. In Roth/ Meshulam's
  theorem one simply passes on to one of the hyperplanes defined by the large Fourier coefficient on which $A$ is denser. In our situation, as we have the restriction
  that $a$ must be from $\{0,1,2\}^n$, we must take that into account when reducing ourselves to subspaces on which $A$ is denser.
  \item Third, in principle there may be non-trivial embeddings of $\mu$.
\end{enumerate}
Nevertheless, we show that a density increment approach is feasible, and address each one of these issues separately.
Below, we give a high level overview of how each one of the above issues is handled.

\paragraph{Addressing the first issue.} The first issue is the easiest one to handle,
and we do so via \emph{random restrictions}. By that, we mean that we choose a subset $I\subset [n]$ randomly by including each coordinate $i\in [n]$ with
probability $1/2d$, choose $y\sim \mathbb{F}_p^{\overline{I}}$ uniformly and consider the function $f_{\overline{I}\rightarrow y}\colon\mathbb{F}_p^{I}\to[-1,1]$
defined by $f_{\overline{I}\rightarrow y}(z) = f(x_I = z, x_{\overline{I}} = y)$; here, the point $(x_I = z, x_{\overline{I}} = y)$ is the point in which
coordinates of $I$ are filled according to $z$, and the rest are filled according to $y$. It can be shown that with noticeable probability, if~\eqref{eq:intro2}
holds then after choosing $I$ and $y$ as above, the function $f_{\overline{I}\rightarrow y}$ is correlated with a product function (which is the restriction
of the product function from~\eqref{eq:intro2}), and thereby we have eliminated $L$ altogether. Using such ideas repeatedly, we are able to get rid of low-degree
functions $L$ whenever they are present, but remark that this often comes at the expense of some complications. Henceforth, for the rest of
this proof overview we ignore $L$ in~\eqref{eq:intro2},
and pretend that this theorem gives us a correlation with a product function $P = \prod\limits_{i=1}^{n}(\chi_i\circ \sigma)(x_i)$.

\paragraph{Addressing the second issue.}
For the second issue (ignoring the third issue for now), we show that one can indeed go to a subspace (of not too much smaller dimension) in a way that
preserves the structure of restricted $3$-AP's and at the same time increases the density of $A$.
Indeed, write $P = \prod\limits_{i=1}^{n} (\chi_i\circ\sigma)(x_i)$ and suppose that the functions we multiply amount to characters over $\mathbb{F}_p$, say
$(\chi_i\circ\sigma)(x_i) = e^{\frac{2\pi}{p}\alpha_i x_i {\bf i}}$ for $\alpha_i\in \mathbb{F}_p$.
Then by the Pigeonhole principle one may find a value $\alpha\in \mathbb{F}_p$ such that $\alpha_i = \alpha$ for at least $n/p$ of the coordinates $i$.
Let this set of coordinates be denote by $I$ and remove from it up to $p-1$ coordinates if necessary so that its size is divisible by $p$.
Hence, we may find at least $m = \Omega(n)$ disjoint sets in $I$ of size exactly $p$, say $I_1,\ldots,I_m$,
and we take $v_1,\ldots,v_m$ to be their indicator vectors, respectively. Noting that $v_1,\ldots,v_m$ are linearly independent, we may complete $v_1,\ldots,v_m$ to a basis
by adding vectors $u_1,\ldots,u_{n-m}$. Define the change of basis map $M\colon\mathbb{F}_p^n\to\mathbb{F}_p^n$ as
\[
w = M(x_1,\ldots,x_m,z_1,\ldots,z_{n-m}) = \sum\limits_{i=1}^m x_i v_i + \sum\limits_{i=1}^{n-m} z_i u_i.
\]
We now consider restrictions of $f$ of the form $f_z = (f\circ M)_{[n]\setminus [m]\rightarrow z}$.
First, note that the set of inputs to $f_z$ corresponds to an affine subspace of dimension $m$ in $\mathbb{F}_p^n$ (which is just the image of
$M$ when the $z$-variables are fixed). Also note that these affine subspaces uniformly cover $\mathbb{F}_p^n$, in the sense that
each point in $\mathbb{F}_p^n$ appears in the same number of these affine subspaces.
Furthermore, $f_z$ is restricted $3$-AP free for each $z$. Indeed, any non-zero common difference in
it translates to a common difference of the form $\sum\limits_{i=1}^m a_i v_i$ in the original domain for $a_1,\ldots,a_m\in \{0,1,2\}$.
Observe that it is a common difference from $\{0,1,2\}^n$ for $f$, as the supports of the $v_i$'s are disjoint and each one of them is $\{0,1\}$-valued.

Thus, after restricting $z$, namely, looking
at the affine subspace $\sum\limits_{i=1}^{n-m} z_i u_i + {\sf Span}(v_1,\ldots,v_m)$ and the set $A_z = f_z^{-1}(1)$ in it,
one gets the same type of restricted $3$-AP problem. The point now is that
on each one of these affine subspaces, the value of $P$ is constant depending only on $z$.
Indeed, on an affine subspace the value of $P$ is
\[
\prod\limits_{k=1}^{m}\prod\limits_{i\in {\sf supp}(v_k)} (\chi_i\circ \sigma)(w_i) =
\prod\limits_{k=1}^{m}
\prod\limits_{i\in {\sf supp}(v_k)}e^{\frac{2\pi}{p}\alpha(x_k+H_i(z)){\bf i}}
=
\prod\limits_{k=1}^{m}
e^{\frac{2\pi}{p}\alpha \card{v_k} x_k{\bf i}}
\prod\limits_{i\in {\sf supp}(v_k)}e^{\frac{2\pi}{p}\alpha H_i(z){\bf i}},
\]
which is equal to $\prod\limits_{i\in {\sf supp}(v_k)}e^{\frac{2\pi}{p}\alpha H_i(z){\bf i}}$ as $e^{\frac{2\pi}{p}\alpha \card{v_k} x_k{\bf i}} = 1$ for all $k$
(here we use the fact that the size of the support of $v_k$ is $p$).
Overall, the average of $f_z$ over the choice of $z$ is $\E[f]$, and as $f$ is correlated with $P$ there is variance in the average of $f_z$ on these parts.
Thus we may find $z$ on which $f_z$ has significantly larger density than that of $f$. In the main body of the paper, we refer to such changes of basis as ``specialized change of basis'',
and to the above operation as ``specialized change of basis and restriction of the $z$-part''.

\paragraph{Addressing the third issue.}
While the third issue seems to be more complicated than the second issue, it turns out that it could be resolved using the same idea (but requires
a bit more effort). In a sense, in the above argument we didn't really use the fact that $\chi_i\circ \sigma$ is of any special form; rather, that we could
use the pigeonhole principle so as to find many coordinates $i$ for which this function is the same.
At that point we could ``identify'' these variables (this is effectively what the basis elements $v_i$ do), and make sure that in total the number of $i$'s
we identify in this way is a multiple of $r$, these $\chi_i\circ\sigma$ trivial to a constant.

Thus, effectively we are finding a subspace of $\mathbb{F}_p^n$ on which we can lift restricted $3$-AP's to restricted $3$-AP's in $\mathbb{F}_p^n$,
and on which the function $P(x) = \prod\limits_{i=1}^{n} \chi_i\circ \sigma(x_i)$ becomes constant.
This turns out to be true so long as we can apply the pigeonhole principle, but a bit more care is needed.
Indeed, a closer inspection of the effect of the specialized change of basis and restricting the
$z$-part, one gets that $f_z(x) = f(w)$ where $w_i$ either only depends on $z$ (for $i\in [n]\setminus [m]$), or else $w_i = x_k + H_i(z)$
where $k\in [m]$ is the unique $k$ such that $i\in {\sf supp}(v_k)$ and $H_i(z)$ is some linear function in $z$.

The above argument would work if we were able to ``ignore'' the shift $H_i(z)$. Indeed, if no $H_i(z)$ was present than a similar calculation to
before shows that $P$ is constant on the restrictions of the $z$-part after a suitable choice of partition. To see that, we first choose a set $I$ of $\Omega(n)$
coordinates $i$ on which the characters $\chi_i$ are all the same and equal to some $\chi\in\hat{H}$ (this is equivalent to $\alpha_i = \alpha$
in the previous argument). We then partition $I$ into $I_1,\ldots,I_m$ disjoint sets of size $\card{H}$ each, where $m = \Omega(n)$, and take $v_i$ to
be the vector supported only on coordinates in $I_i$. Thus,
\[
\prod\limits_{k=1}^{m}\prod\limits_{i\in {\sf supp}(v_k)} (\chi_i\circ \sigma)(w_i) =
\prod\limits_{k=1}^{m}
\prod\limits_{i\in {\sf supp}(v_k)}\chi(\sigma(x_k + H_i(z)))
=
\prod\limits_{k=1}^{m}
\prod\limits_{i\in {\sf supp}(v_k)}\chi(\sigma(x_k))
=
\prod\limits_{k=1}^{m}
\chi(\sigma(x_k))^{\card{v_k}},
\]
which is equal to $1$ as $\chi^{\card{H}} \equiv 1$ for all $\chi\in\hat{H}$.

We show that in expectation, for many $k\in [m]$ it holds that $H_i(z) = 0$ for all $i\in {\sf supp}(v_k)$,
so we could focus on these $k$'s and restrict the rest, thereby effectively ignore the shifts $H_i(z)$.

\paragraph{Combining the above ideas.} Our formal proof combines the above ideas together, and we show that one can find a significant density bump for $A$
after a sequence of restrictions and specialized change of basis and restriction of the $z$-part. This facilitates a density increment argument that proceeds
iteratively, and following the argument closely and the quantitative bounds in it, one gets Theorem~\ref{thm:main_el}.

\section{Preliminaries}\label{sec:prelim}
In this section we present several, mostly standard notions from analysis of Boolean functions. We refer the reader to~\cite{Odonell} for a
more systematic presentation.
\subsection{The Efron-Stein Decomposition}
In this paper, we will often deal with function over finite product spaces, namely $f\colon (\Sigma^n,\mu^{\otimes n})\to\mathbb{C}$
where $\mu$ is some probability distribution and $\Sigma$ is a finite alphabet. We think of the space $L_2(\Sigma^n,\mu^{\otimes n})$
as an inner product space endowed with the standard $L_2$ inner product
\[
\inner{f}{g} = \Expect{x\sim\mu^{\otimes n}}{f(x)\overline{g(x)}}.
\]
For a subset of coordinates $S\subseteq [n]$, a function
$f$ is called an $S$-junta if there is $g\colon \Sigma^S\to\mathbb{C}$ such that $f(x) = g(x_S)$ for all $x\in\Sigma^n$. We may
thus define the linear space $V_{\subseteq S}\subseteq L_2(\Sigma^n)$ as the span of all $S$-juntas, and then
$V_{=S} = V_{\subseteq S}\cap\bigcap_{T\subsetneq S} V_{\subseteq T}^{\perp}$. It can be shown that the spaces $V_{=S}$ are mutually orthogonal,
and subsequently that $L_2(\Sigma^n,\mu^{\otimes n})$ is the direct sum of $V_{=S}$. This leads to an orthogonal decomposition of any $f\in L_2(\Sigma^n,\mu^{\otimes n})$,
which is called Efron-Stein decomposition:
\begin{fact}
  Given $f\colon (\Sigma^n,\mu^{\otimes n})\to\mathbb{C}$, there is a unique way to write $f = \sum\limits_{S\subseteq [n]} f^{=S}$,
  where $f^{=S}$ is in $V_{=S}$.
\end{fact}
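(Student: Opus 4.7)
The plan is to construct a family of commuting orthogonal projections $P_S$ on $L_2(\Sigma^n,\mu^{\otimes n})$, indexed by $S\subseteq[n]$, that sum to the identity and whose images are exactly the subspaces $V_{=S}$. This gives both existence and uniqueness of the decomposition in one shot.

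First, I would introduce the coordinate-averaging operators $E_i$, $i\in[n]$, defined by
\[
(E_i f)(x) = \Expect{y_i\sim\mu}{f(x_1,\ldots,x_{i-1},y_i,x_{i+1},\ldots,x_n)}.
\]
Each $E_i$ is linear and idempotent, and is self-adjoint with respect to $\inner{\cdot}{\cdot}$ because $\mu^{\otimes n}$ is a product measure, so Fubini lets one move the averaging in $y_i$ from one argument of the inner product to the other. Hence each $E_i$ is an orthogonal projection. The $E_i$'s pairwise commute (averaging over distinct coordinates can be done in either order), and for any $T\subseteq[n]$ the product $\prod_{j\notin T}E_j$ is the orthogonal projection of $L_2(\Sigma^n,\mu^{\otimes n})$ onto $V_{\subseteq T}$.

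For each $S\subseteq[n]$ I then define
\[
P_S = \prod_{i\in S}(I-E_i)\prod_{j\notin S}E_j,
\]
which is an orthogonal projection since it is a product of pairwise commuting orthogonal projections. Expanding $I = \prod_{i\in[n]}\bigl((I-E_i)+E_i\bigr)$ and distributing yields $\sum_{S\subseteq[n]}P_S = I$. For $S\ne S'$, any $i$ in the symmetric difference witnesses $P_S P_{S'} = 0$ via the identity $E_i(I-E_i)=0$, so the $P_S$'s are pairwise orthogonal.

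The key step, and the one where I would take the most care, is to identify $\mathrm{Image}(P_S) = V_{=S}$. For the inclusion ``$\subseteq$'', the factor $\prod_{j\notin S}E_j$ sends everything into $V_{\subseteq S}$; moreover, for each $T\subsetneq S$, any $h\in V_{\subseteq T}$ satisfies $E_i h = h$ for some $i\in S\setminus T$, so $\inner{P_S f}{h} = 0$ by moving the factor $(I-E_i)$ onto $h$ via self-adjointness. For the reverse inclusion, given $g\in V_{=S}$, the junta property gives $E_j g = g$ for $j\notin S$; for $i\in S$, the function $E_i g$ lies in $V_{\subseteq S\setminus\{i\}}$, and self-adjointness together with $g\perp V_{\subseteq S\setminus\{i\}}$ gives $\inner{E_i g}{E_i g} = \inner{g}{E_i g} = 0$, whence $E_i g = 0$ and $(I-E_i)g = g$. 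Thus $P_S g = g$, so $g\in\mathrm{Image}(P_S)$. Setting $f^{=S} = P_S f$ now yields existence via $\sum_S P_S = I$, and uniqueness follows immediately from the mutual orthogonality of the $V_{=S}$'s.
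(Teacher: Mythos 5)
Your proof is correct. The paper does not actually prove this fact --- it is stated as a standard result, with the one-line justification that the spaces $V_{=S}$ are mutually orthogonal and span $L_2(\Sigma^n,\mu^{\otimes n})$, and a pointer to the standard reference on analysis of Boolean functions. Your argument via the commuting coordinate-averaging projections $E_i$ and the operators $P_S=\prod_{i\in S}(I-E_i)\prod_{j\notin S}E_j$ is exactly the standard textbook proof of the Efron--Stein decomposition, and all the key steps (self-adjointness of $E_i$ under the product measure, $\sum_S P_S=I$, $P_SP_{S'}=0$, and the identification $\mathrm{Image}(P_S)=V_{=S}$, including the clean argument that $E_ig=0$ for $g\in V_{=S}$ and $i\in S$) are carried out correctly, so it fills the omitted proof as intended.
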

For $S=\emptyset$, the corresponding term in the Efron-Stein decomposition is the constant function closest to $f$ in $L_2$-norm, which
is $f^{=\emptyset} = \Expect{x\sim \mu}{f(x)}$, which we often abbreviate as $\E[f]$.

The degree decomposition of a function is a coarser decomposition which is sometimes more convenient to work with. For $d=0,\ldots,n$,
we denote $f^{=d} = \sum\limits_{\card{S} = d} f^{=S}$ as well as $f^{\leq d} = \sum\limits_{i=0}^{d} f^{=i}$. We often refer to
$f^{\leq d}$ as the ``degree $d$ part of $f$''.
Next, we define the level $d$ weights of a function $f$.
\begin{definition}
  Given $f\colon (\Sigma^n,\mu^{\otimes n})\to\mathbb{C}$ and $d \in \{0,\ldots,n\}$, the weight of $f$ on level exactly $d$ is defined as
  $W_{=d}[f] = \norm{f^{=d}}_2^2$. The weight of $f$ on level $d$ is defined as $W_{\leq d}[f] = \sum\limits_{i=0}^{d}W_{=i}[f]$, which
  by orthogonality of the functions $f^{=i}$ is equal to $\norm{f^{\leq d}}_2^2$.
\end{definition}

\subsection{Markov Chains}
Given a probability space $(\Sigma,\mu)$, we will often consider Markov chains over it whose stationary distribution is $\mu$.
We often denote these by $\mathrm{T}$, and abusing notations we also think of it as an averaging operator from $L_2(\Sigma,\mu)$
to $L_2(\Sigma,\mu)$ defined as
\[
\mathrm{T} f(x) = \Expect{y\sim \mathrm{T}x}{f(y)}.
\]
We say a Markov chain $\mathrm{T}$ is connected if the graph, whose vertices are $\Sigma$ and the edges are $(a,b)$ if there is
a transition from $a$ to $b$ in $\mathrm{T}$, is connected.

Below, we state a few well known properties of Markov chains that we will need; see for example~\cite{mossel2010gaussian}.
If $\mathrm{T}$ is connected and the probability of each transition is at least $q$, then $\lambda_2(\mathrm{T})\leq 1-\Omega_{q}(1)$.
Given a basic operator $\mathrm{T}$, we often think of its $n$-fold tensor $\mathrm{T}^{\otimes n}$, and once again we also think of it as an averaging operator on $L_2(\Sigma^n,\mu^{\otimes n})$. In the case that $\mu$ is a stationary distribution of $\mathrm{T}$, it is easily shown that the spaces $V_{=S}$ are invariant under $\mathrm{T}^{\otimes n}$, and for each $g\in V_{=S}$ it holds that $\norm{\mathrm{T}^{\otimes n} g}_2\leq \lambda_2(\mathrm{T})^{\card{S}}\norm{g}_2$.

\skipi

The following lemma asserts that if $f,g$ are Boolean functions and $f-\E[f]$ has small weight on the low levels, then the probability that $x\sim\mu^{\otimes n}$
and $y\sim\mathrm{T}^{\otimes n} x$
satisfy that $f(x) = g(y) = 1$ is close to $\E[f]\E[g]$ (which is the probability if $x$ and $y$ were sampled according to $\mu^{\otimes n}$ independently).
In our density increment argument, this lemma will be a way for us to argue that $f-\E[f]$ has considerable weight on the low-levels.
\begin{lemma}\label{lem:small_wt_to_neg}
  Suppose $(\Sigma,\mu)$ is a finite domain and $\mathrm{T}$ is a connected Markov chain with stationary distribution $\mu$, in
  which the probability of each atom is at least $q$. Then for all $\alpha, \beta>0$ there is $d = O_q(\log(1/\alpha\beta))$ such that
  if functions $f,g\colon (\Sigma^n,\mu^{\otimes n})\to\{0,1\}$ have averages $\alpha, \beta$ respectively and
  satisfy that $W_{\leq d}[f-\alpha]\leq \frac{\alpha^{2}\beta^2}{100}$, then
  $\inner{f}{\mathrm{T}^{\otimes n} g}\geq \frac{4}{5}\alpha\beta$.
\end{lemma}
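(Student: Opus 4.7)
The plan is to reduce the inner product $\inner{f}{\mathrm{T}^{\otimes n} g}$ to its expected ``main term'' $\alpha\beta$ plus a small error, and to show the error is at most $\tfrac{1}{5}\alpha\beta$ in absolute value by splitting the Efron--Stein decomposition into low and high degree pieces. First I would write
\[
\inner{f}{\mathrm{T}^{\otimes n} g} \;=\; \inner{\alpha}{\mathrm{T}^{\otimes n} g} \;+\; \inner{f-\alpha}{\mathrm{T}^{\otimes n} g} \;=\; \alpha\beta \;+\; \inner{f-\alpha}{\mathrm{T}^{\otimes n} g},
\]
using that $\mathrm{T}$ preserves the stationary measure, so $\E[\mathrm{T}^{\otimes n} g] = \E[g] = \beta$. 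It therefore suffices to bound $|\inner{f-\alpha}{\mathrm{T}^{\otimes n} g}| \leq \tfrac{1}{5}\alpha\beta$.

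Next I would split $f-\alpha = (f-\alpha)^{\leq d} + (f-\alpha)^{>d}$. For the low-degree piece, Cauchy--Schwarz together with the contractivity of $\mathrm{T}^{\otimes n}$ yields
\[
|\inner{(f-\alpha)^{\leq d}}{\mathrm{T}^{\otimes n} g}| \;\leq\; \sqrt{W_{\leq d}[f-\alpha]} \cdot \|g\|_2 \;\leq\; \frac{\alpha\beta}{10}\sqrt{\beta} \;\leq\; \frac{\alpha\beta}{10},
\]
using the hypothesis $W_{\leq d}[f-\alpha]\leq \alpha^2\beta^2/100$ and $\|g\|_2^2 = \beta \leq 1$.

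For the high-degree piece, I would exploit the fact that the spaces $V_{=S}$ are invariant under $\mathrm{T}^{\otimes n}$ and that $\|\mathrm{T}^{\otimes n} h\|_2 \leq \lambda^{|S|}\|h\|_2$ for $h\in V_{=S}$, where $\lambda := \lambda_2(\mathrm{T}) \leq 1 - \Omega_q(1)$ by the spectral-gap bound for connected Markov chains cited in the preliminaries. By orthogonality of the spaces $V_{=S}$ and two applications of Cauchy--Schwarz,
\[
|\inner{(f-\alpha)^{>d}}{\mathrm{T}^{\otimes n} g}| \;\leq\; \sum_{|S|>d} \lambda^{|S|}\|f^{=S}\|_2 \|g^{=S}\|_2 \;\leq\; \lambda^{d+1}\|f\|_2 \|g\|_2 \;=\; \lambda^{d+1}\sqrt{\alpha\beta}.
\]
I would then choose $d$ large enough so that $\lambda^{d+1} \leq \sqrt{\alpha\beta}/10$; since $\log(1/\lambda) = \Omega_q(1)$, the choice $d = O_q(\log(1/\alpha\beta))$ suffices, and makes the high-degree contribution at most $\alpha\beta/10$.

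Combining the two pieces gives $|\inner{f-\alpha}{\mathrm{T}^{\otimes n} g}|\leq \alpha\beta/5$, and substituting back into the first display yields $\inner{f}{\mathrm{T}^{\otimes n} g}\geq \tfrac{4}{5}\alpha\beta$. There is no real obstacle here: the argument is an entirely standard Efron--Stein/spectral-gap decomposition, and the only place requiring mild care is balancing the constants so that both error terms fit under $\tfrac{1}{10}\alpha\beta$, which controls the dependence of $d$ on $q$ through the $\log(1/\lambda)$ factor.
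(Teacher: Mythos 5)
Your proposal is correct and follows essentially the same route as the paper: Efron--Stein decomposition, isolating the constant term $\alpha\beta$, bounding the low-degree contribution by the hypothesis $W_{\leq d}[f-\alpha]\leq \alpha^2\beta^2/100$ via Cauchy--Schwarz, and killing the high-degree contribution using the invariance of the $V_{=S}$ spaces under $\mathrm{T}^{\otimes n}$ together with the decay $\lambda_2(\mathrm{T})^{|S|}$ and the choice $d=O_q(\log(1/\alpha\beta))$. The only differences are cosmetic (you bound the low-degree piece by $\sqrt{W_{\leq d}[f-\alpha]}\,\|g\|_2$ rather than $\sqrt{W_{\leq d}[f-\alpha]W_{\leq d}[g-\beta]}$, and split $f-\alpha$ rather than pairing the $V_{=S}$ components of both functions), and the constants work out the same way.
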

\begin{proof}
  Decomposing $f,g$ according to the Efron-Stein decomposition over $(\Sigma^n,\mu^{\otimes n})$ as $f = \sum\limits_{S} f^{=S}$
  and $g = \sum\limits_{S} g^{=S}$, we get that
  \[
  \inner{f}{\mathrm{T}^{\otimes n} g}
  =\sum\limits_{S,Q}\inner{f^{=S}}{\mathrm{T}^{\otimes n} g^{=Q}}
  =\sum\limits_{S}\inner{f^{=S}}{\mathrm{T}^{\otimes n} g^{=S}}
  =\alpha\beta + \sum\limits_{S\neq \emptyset}\inner{f^{=S}}{\mathrm{T}^{\otimes n} g^{=S}}.
  \]
  The contribution from $\card{S}\leq d$ is at most
  \begin{align*}
  \sum\limits_{0<\card{S}\leq d}\card{\inner{f^{=S}}{\mathrm{T}^{\otimes n} g^{=S}}}
  \leq
  \sum\limits_{0<\card{S}\leq d}\norm{f^{=S}}_2\norm{\mathrm{T}^{\otimes n} g^{=S}}_2
  &\leq
  \sum\limits_{0<\card{S}\leq d}\norm{f^{=S}}_2\norm{g^{=S}}_2\\
  &\leq\sqrt{W_{\leq d}[f-\alpha]W_{\leq d}[g-\beta]}\\
  &\leq \frac{\alpha\beta}{10}.
  \end{align*}
  For $\card{S}>d$, we have that
  \[
  \norm{\mathrm{T}^{\otimes n} g^{=S}}_2
  \leq \lambda_2(\mathrm{T})^{\card{S}}\norm{g^{=S}}_2
  \leq (1-\Omega_q(1))^d\norm{g^{=S}}_2
  \leq \frac{\alpha\beta}{100}\norm{g^{=S}}_2
  \]
  for $d$ chosen suitably as in the statement. Thus, the contribution from $\card{S} > d$ is at most
  \[
  \sum\limits_{\card{S}>d}\card{\inner{f^{=S}}{\mathrm{T}^{\otimes n} g^{=S}}}
  \leq
  \sum\limits_{\card{S}>d}\norm{f^{=S}}_2\norm{\mathrm{T}^{\otimes n} g^{=S}}_2
  \leq
  \frac{\alpha\beta}{100}\sum\limits_{0<\card{S}\leq d}\norm{f^{=S}}_2\norm{g^{=S}}_2
  \leq \frac{\alpha\beta}{100}.
  \]
  Combining, we get that $\card{\sum\limits_{S\neq \emptyset}\inner{f^{=S}}{\mathrm{T}^{\otimes n} g^{=S}}}\leq \frac{\alpha\beta}{5}$, and so
  $\inner{f}{\mathrm{T}^{\otimes n} g}\geq \frac{4}{5}\alpha\beta$.
\end{proof}

\subsection{Random Restrictions}
We will make heavy use of random restrictions. Formally, given a function $f\colon (\Sigma^n,\mu^{\otimes n})\to\mathbb{C}$,
a restriction of it amounts to choosing $I\subseteq [n]$ a set of variables to remain alive and a setting $y\in\Sigma^{\overline{I}}$
for the rest of the variables. In that case, one gets the restricted function $f_{\overline{I}\rightarrow y}\colon\Sigma^{I}\to\mathbb{C}$
defined as
\[
f_{\overline{I}\rightarrow y}(z) = f(x_{I} = z, x_{\bar{I}} = y),
\]
where $(x_I = z, x_{\overline{I}} = y)$ is the point in $\Sigma^n$ whose $I$-coordinates are filled according to $z$, and whose $\bar{I}$-coordinates
are filled according to $y$.

A random restriction amounts to choosing either $I$, $y$ or both randomly. Fixing $I$, a random restriction could mean that the setting of $y$ is chosen
according to $\mu^{\overline{I}}$. Otherwise, we will have a probability parameter $q\in (0,1)$, in which case we choose $I\subseteq[n]$ randomly by including
each $i\in[n]$ in it with probability $q$ independently, and then choose $y\sim\mu^{\overline{I}}$. Whenever we apply random restrictions,
it will always be clear from context which variant we are using.

\skipi

The following simple lemma asserts that if a function $f$ is such that $f-\E[f]$ has significant weight on the low-degrees, then we can find a restriction
that leaves a considerable fraction of the variables alive and has significantly larger average than that of $f$. This will be very useful for us in the
density increment as it will tell us that once we find significant weight on the low-levels, we can automatically convert it to a density bump.
\begin{lemma}\label{lem:rest_to_bump}
  There exists $c>0$ such that the following holds.
  Suppose that $f\colon (\Sigma^n,\mu^{\otimes n})\to\{0,1\}$ is a function
  such that $W_{\leq d}[f-\E[f]]\geq \xi$ where $\xi>0$ and $d\in\mathbb{N}$ satisfy that $\xi\geq 2^{-c\frac{n}{d^2}}$.
  Then there is $I\subseteq [n]$ of size at least $\frac{1}{2d} n$ and $y\in\Sigma^{\overline{I}}$
  such that $\E[f_{\overline{I}\rightarrow y}]\geq \E[f] + \frac{\xi}{4e}$.
\end{lemma}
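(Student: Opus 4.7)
The plan is to apply a single random restriction and convert the low-degree weight hypothesis on $f$ into a one-sided density bump via the second moment method. Set $\alpha=\E[f]$ and sample $I\subseteq[n]$ uniformly at random among subsets of size $k:=\lceil n/(2d)\rceil$, so that $|I|\geq n/(2d)$ holds deterministically, then sample $y\sim\mu^{\overline I}$ independently. Define
\[
\phi(I,y):=\Expect{z\sim\mu^{I}}{f(x_I=z,\,x_{\overline I}=y)},
\]
which takes values in $[0,1]$ and satisfies $\Expect{I,y}{\phi}=\alpha$ by the tower rule; the task is then to exhibit an outcome with $\phi(I,y)\geq\alpha+\xi/(4e)$.

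First I would expand $\phi-\alpha$ in the Efron--Stein basis of $L_2(\Sigma^n,\mu^{\otimes n})$. Writing $f=\sum_{S}f^{=S}$, the zero-marginal property of each $f^{=S}$ kills every term with $S\cap I\neq\emptyset$ when we integrate over $z\sim\mu^{I}$, leaving $\phi(I,y)-\alpha=\sum_{\emptyset\neq S\subseteq\overline I}f^{=S}(y)$. Orthogonality under $\mu^{\otimes n}$ together with the hypergeometric identity $\Pr_{I}[S\subseteq\overline I]=\binom{n-|S|}{k}/\binom{n}{k}$ then yields
\[
\Expect{I,y}{(\phi-\alpha)^{2}}=\sum_{\emptyset\neq S}\Pr_{I}[S\subseteq\overline I]\cdot\norm{f^{=S}}_{2}^{2}\geq \frac{1}{2e}\,W_{\leq d}[f-\alpha]\geq \frac{\xi}{2e},
\]
using the estimate $\prod_{j=0}^{|S|-1}(1-k/(n-j))\geq(1-1/d)^{d}\geq 1/(2e)$ for $|S|\leq d$, which is valid once $n$ is a small multiple of $d$ (concretely, $n\geq 4d$ suffices).

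The conversion to a one-sided deviation is routine: since $\phi\in[0,1]$ we have $(\phi-\alpha)^{2}\leq|\phi-\alpha|$, so $\Expect{I,y}{|\phi-\alpha|}\geq\xi/(2e)$; and $\Expect{I,y}{\phi-\alpha}=0$ implies that the positive and negative parts of $\phi-\alpha$ share equal expectation, so $\Expect{I,y}{(\phi-\alpha)^{+}}\geq\xi/(4e)$. Consequently some outcome $(I,y)$ realises $\phi(I,y)\geq\alpha+\xi/(4e)$, and as $|I|=k\geq n/(2d)$ by construction, the restriction $f_{\overline I\rightarrow y}$ is the one required by the lemma.

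The only real subtlety is the hypergeometric lower bound $\Pr_{I}[S\subseteq\overline I]\geq 1/(2e)$ uniform in $|S|\leq d$, which requires $n$ to be at least a small multiple of $d$. This is where the quantitative hypothesis $\xi\geq 2^{-cn/d^{2}}$ is used: taking $c$ small enough ensures that whenever $n$ is too small for this estimate, $\xi$ is forced to be a positive constant, at which point the claim can be handled directly (for instance by using a coarser restriction size). An alternative implementation performs a Bernoulli random restriction with parameter $q=1/d$, obtaining the same variance bound via $(1-q)^{d}\geq 1/(2e)$ and absorbing the Chernoff tail $\Pr[|I|<n/(2d)]\leq e^{-n/(8d)}$ into $\xi$ using the same hypothesis.
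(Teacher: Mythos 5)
Your proof is correct and is essentially the paper's argument: a random restriction leaving about $n/(2d)$ live coordinates, an Efron--Stein second-moment computation showing that each nonempty level-$\leq d$ component lands inside $\overline{I}$ with constant probability, so the restricted average fluctuates by $\Omega(\xi)$ in square, followed by an averaging step extracting a one-sided bump of $\xi/(4e)$. The only deviations are cosmetic: you sample $I$ of fixed size $\lceil n/(2d)\rceil$, which removes the paper's Chernoff step and leaves the hypothesis $\xi\geq 2^{-cn/d^2}$ needed only to make the statement vacuous when $n=O(d)$, and you extract the bump from the centered second moment via the mean-zero positive-part trick rather than from $\E[Z_{I,y}^2]\geq \alpha^2+\xi/e$ plus a square root (the one nitpick being that your chain $(1-1/d)^{d}\geq 1/(2e)$ fails at $d=1$, an edge case fixed directly since there $\Pr_I[S\subseteq\overline{I}]\approx 1/2$).
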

\begin{proof}
  Denote $\alpha = \E[f]$.
  Choose a random restriction $(I,y)$ by including each $i\in [n]$ in $I$ with probability $1/d$ and sampling $y\sim\mu^{\overline{I}}$,
  and define the random variable $Z_{I,y} = \E[f_{\overline{I}\rightarrow y}]$.
  Then
  \[
    \E_{I,y}[Z_{I,y}^2] = \Expect{I}{\Expect{y\sim\mu^{\overline{I}}}{\left(\sum\limits_{S} f^{=S}(y)1_{S\subseteq \overline{I}}\right)^2}}
    =\Expect{I}{\sum\limits_{S} \norm{f^{=S}}_2^21_{S\subseteq \overline{I}}}
    \geq \alpha^2 + \frac{1}{e}\sum\limits_{1\leq\card{S}\leq d} \norm{f^{=S}}_2^2,
  \]
  which is at least $\alpha^2 + \frac{1}{e}\xi$. Here, we used the fact that for each $S$ of size at most $d$, the probability that
  $S$ is contained in $\overline{I}$ is at least $1/e$. It follows from an averaging argument that with probability at least $\xi/2e$
  over the choice of $y,I$ we have $Z_{I,y}^2\geq \alpha^2 + \frac{1}{2e}\xi$.
  As $\card{I}\geq \frac{1}{2d} n$ with probability at least $1-2^{-\Omega(n/d^2)}$, we get
  that with probability at least $\frac{\xi}{2e} - 2^{-\Omega(n/d^2)}\geq \frac{\xi}{4e}$ both events hold,
  and we have
  \[
  Z_{I,y}\geq \sqrt{\alpha^2 + \frac{1}{2e}\xi}\geq \alpha\sqrt{1+\frac{\xi}{2e\alpha^2}}
  \geq \alpha\left(1+\frac{\xi}{4e\alpha}\right).
  \]
\end{proof}

The next lemma is very similar to the previous one, except that it applies to complex valued functions and the goal in it
is to show that a function with significant weight on the low-levels becomes somewhat biased after a random restriction.
It will be helpful for us whenever we try to get rid of low-degree functions $L$ so as to get correlations with product
functions, as explained in the introduction.
\begin{lemma}\label{lem:rest_to_correlation}
  There exists $c>0$ such that the following holds.
  Suppose that $g\colon (\Sigma^n,\mu^{\otimes n})\to\mathbb{C}$ is a $1$-bounded function
  such that $W_{\leq d}[g]\geq \xi$.
  Then choosing a random restriction $(I,y)$ where each $i\in [n]$ is included in $I$ with probability $1/2d$ and $y\sim \mu^{\overline{I}}$, we have that
  \[
        \Prob{I,y}{\card{\E[g_{\overline{I}\rightarrow y}]}\geq \sqrt{\xi/2e}}\geq \frac{\xi}{2e}.
  \]
\end{lemma}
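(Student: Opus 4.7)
The plan is to mimic the second-moment argument of Lemma~\ref{lem:rest_to_bump}, adjusted for the fact that $g$ is complex-valued and we now want a lower bound on $|\E[g_{\overline{I}\to y}]|$ rather than on $\E[g_{\overline{I}\to y}]$ itself. First I would expand the restricted mean using the Efron-Stein decomposition: since averaging $g^{=S}$ over a uniform setting of the $I$-coordinates kills the summand whenever $S\cap I\neq \emptyset$, one obtains
\[
\E[g_{\overline{I}\rightarrow y}] = \sum_{S\subseteq \overline{I}} g^{=S}(y).
\]
Then for fixed $I$, orthogonality of the Efron-Stein summands over $\mu^{\overline{I}}$ gives $\E_{y}\bigl[|\E[g_{\overline{I}\rightarrow y}]|^2\bigr] = \sum_{S\subseteq \overline{I}} \|g^{=S}\|_2^2$, and averaging over $I$ yields
\[
\E_{I,y}\bigl[|\E[g_{\overline{I}\rightarrow y}]|^2\bigr] = \sum_{S\subseteq [n]} \Pr_I[S\subseteq \overline{I}]\,\|g^{=S}\|_2^2.
\]

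Next I would restrict attention to $|S|\leq d$, on which $\Pr_I[S\subseteq \overline{I}] = (1-1/2d)^{|S|} \geq (1-1/2d)^d \geq e^{-1/2}$ (by a standard estimate on $d\log(1-1/2d)$). Using $W_{\leq d}[g]\geq \xi$, this shows that the random variable $Z_{I,y} := |\E[g_{\overline{I}\rightarrow y}]|^2$ satisfies $\E[Z_{I,y}] \geq \xi/\sqrt{e}$. Since $g$ is $1$-bounded we also have $Z_{I,y}\leq 1$, so the standard one-line Paley–Zygmund type argument — writing $\E[Z_{I,y}] \leq \Pr[Z_{I,y}\geq \xi/(2e)] + \xi/(2e)$ — gives $\Pr[Z_{I,y}\geq \xi/(2e)] \geq \xi/\sqrt{e} - \xi/(2e) \geq \xi/(2e)$. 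Taking square roots concludes the bound $|\E[g_{\overline{I}\rightarrow y}]|\geq \sqrt{\xi/(2e)}$ on that event.

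The argument is essentially routine once the expansion in the first display is in hand; the only place worth double-checking is the passage from the complex-valued second moment to a probability bound, which relies crucially on the $1$-boundedness of $g$ to allow the trivial upper bound $Z_{I,y}\leq 1$. Unlike Lemma~\ref{lem:rest_to_bump}, we do not need to intersect with an event on $|I|$, since the statement makes no claim about the size of the alive set — this makes the proof slightly shorter than its predecessor.
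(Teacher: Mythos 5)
Your proof follows the paper's argument essentially verbatim: compute the second moment of $Z_{I,y}=\E[g_{\overline{I}\rightarrow y}]$ via Efron--Stein orthogonality, lower-bound the survival probability of each low-level term, and convert the second-moment bound into a probability bound using $|Z_{I,y}|\leq 1$. One small correction: your claimed estimate $(1-1/2d)^d\geq e^{-1/2}$ is false for every finite $d$ (since $(1-x)^{1/x}<e^{-1}$ for $x\in(0,1)$, the quantity increases to $e^{-1/2}$ from below, e.g.\ it equals $1/2$ at $d=1$); however the correct bound $(1-1/2d)^d\geq 1/2\geq 1/e$ — which is what the paper uses — still gives $\E[|Z_{I,y}|^2]\geq \xi/e$ and hence $\Pr\bigl[|Z_{I,y}|^2\geq \xi/2e\bigr]\geq \xi/e-\xi/2e=\xi/2e$, so the conclusion is unaffected.
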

\begin{proof}
  Define the random variable $Z_{I,y} = \E[f_{\overline{I}\rightarrow y}]$, and note that
  \[
    \E_{I,y}[\card{Z_{I,y}}^2] = \Expect{I}{\Expect{y\sim\mu^{\overline{I}}}{\card{\sum\limits_{S\neq \emptyset} g^{=S}(y)1_{S\subseteq \overline{I}}}^2}}
    =\Expect{I}{\sum\limits_{S} \norm{g^{=S}}_2^21_{S\subseteq \overline{I}}}
    \geq \frac{1}{e}\sum\limits_{0\leq \card{S}\leq d} \norm{f^{=S}}_2^2
    \geq \frac{\xi}{e}.
  \]
  As $\card{Z_{I,y}}\leq 1$ always, it follows that with probability at least $\frac{\xi}{2e}$ we have $\card{Z_{I,y}}\geq \sqrt{\frac{\xi}{2e}}$.
\end{proof}

\subsection{The CSP Stability Result}
Lastly, we need the following stability result from~\cite{BKMcsp4} discussed in the introduction; below is a formal statement.
\begin{thm}\label{thm:stab}
  For all $m\in\mathbb{N}$, $\alpha>0$ and $\eps>0$, there exists $d\in\mathbb{N}$ and $\delta>0$ such that the following holds.
  Suppose that $\mu$ is a distribution over $\Sigma\times\Gamma\times \Phi$ such that
  (1) the probability of each atom is at least $\alpha$,
  (2) the size of each one of $\Sigma,\Gamma,\Phi$ is at most $m$,
  (3) ${\sf supp}(\mu)$ is pairwise connected,
  and (4) $\mu$ does not admit non-trivial Abelian embeddings into $(\mathbb{Z},+)$.

  Then, if $f\colon\Sigma^n\to \mathbb{C}$, $g\colon\Gamma^n\to \mathbb{C}$, $h\colon\Phi^n\to \mathbb{C}$
  are $1$-bounded functions such that
  \[
  \card{\Expect{(x,y,z)\sim \mu^{\otimes n}}{f(x)g(y)h(z)}}\geq \eps,
  \]
  then there are $1$-bounded functions $u_1,\ldots,u_n\colon \Sigma\to \mathbb{C}$ and a function $L\colon \Sigma^n\to \mathbb{C}$ of degree at most $d$
  and $2$-norm at most $1$ such that
  \[
  \card{\Expect{x\sim \mu_x^{\otimes n}}{f(x)\cdot \overline{L(x)\prod\limits_{i=1}^{n}u_i(x_i)}}}\geq \delta.
  \]
  Furthermore, there are $r\in\mathbb{N}$ and Abelian group $(H,+)$ of size at most $r$ depending only on $m$,
  an Abelian embedding $(\sigma,\gamma,\phi)$ of $\mu$ into $H$ and characters $\chi_1,\ldots,\chi_n\in\hat{H}$
  such that $u_i(x_i) = \chi_i(\sigma(x_i))$.

  Quantitatively, one may take
  \[
        d = {\sf poly}_{m,\alpha}(1/\eps),
        \qquad
        \delta = 2^{-{\sf poly}_{m,\alpha}(1/\eps)}.
  \]
\end{thm}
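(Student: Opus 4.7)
The plan is to reduce to the $\mathbb{Z}$-embedding-free inverse theorem of \cite{BKMcsp2} by Fourier-decomposing along a \emph{maximal} embedding of $\mu$ into a finite Abelian group. The hypothesis forbids non-trivial embeddings into $(\mathbb{Z},+)$ but still permits embeddings into finite groups, and the characters $\chi_i$ appearing in the conclusion are meant to absorb precisely this finite-embedding structure; once absorbed, what remains should be low-degree.

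First I would show the existence of a finite Abelian group $(H,+)$ of size bounded by a function of $m$, together with an embedding $(\sigma,\gamma,\phi)$ of $\mu$ into $H$ that is \emph{maximal}, in the sense that the induced distribution on $\Sigma/\!\sim_\sigma \times \Gamma/\!\sim_\gamma \times \Phi/\!\sim_\phi$ admits no non-trivial Abelian embedding into any group. Existence of such a maximal $H$ follows from an iterative refinement: each non-trivial embedding strictly coarsens the partition of at least one alphabet, and the partitions can be refined at most $O(\log m)$ times.

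Next, using $H$, I would expand $f,g,h$ as Fourier series along the embedding: write $f(x) = \sum_{\vec\chi \in \hat H^n} \widetilde f_{\vec\chi}(x) \prod_i \overline{\chi_i(\sigma(x_i))}$, where $\widetilde f_{\vec\chi}$ is invariant under the $\hat H$-action through $\sigma$ on each coordinate (similarly for $g,h$). A parallel expansion yields
\[
\Expect{(x,y,z)\sim\mu^{\otimes n}}{f(x)g(y)h(z)} = \sum_{\substack{\vec\chi,\vec\eta,\vec\zeta\\ \text{compatible}}} \Expect{(x,y,z)\sim\mu^{\otimes n}}{\widetilde f_{\vec\chi}(x) \widetilde g_{\vec\eta}(y) \widetilde h_{\vec\zeta}(z)},
\]
where \emph{compatible} means $\chi_i(\sigma(\cdot))\eta_i(\gamma(\cdot))\zeta_i(\phi(\cdot)) \equiv 1$ on ${\sf supp}(\mu)$ coordinatewise. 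By maximality of $H$, the quotient distribution governing the inner expectations admits no non-trivial Abelian embedding into $\mathbb{Z}$, so the inverse theorem of \cite{BKMcsp2} applies to each surviving term and delivers low-degree correlation whenever that term has magnitude $\eps^{O(1)}$.

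The main obstacle is the pigeonhole step: there are $|H|^n$ compatible tuples $\vec\chi$, so a direct averaging loses an exponential-in-$n$ factor. I would overcome this by a Cauchy--Schwarz/hybrid argument: expanding $\card{\E[fgh]}^2$ and integrating out $g,h$ against the product measure bounds $\sum_{\vec\chi} \card{\widetilde f_{\vec\chi}(x)}^2$ from below by $\eps^{O(1)}$ pointwise on average in $n$, so a further averaging produces a single sector of $\ell_2$-mass at least $\eps^{O(1)}$ that may be handed to \cite{BKMcsp2}. Setting $u_i = \chi_i \circ \sigma$ for this $\vec\chi$ and $L = (f \cdot \overline{\prod_i u_i})^{\leq d}$ then yields the conclusion, with $d = {\sf poly}_{m,\alpha}(1/\eps)$ and $\delta = 2^{-{\sf poly}_{m,\alpha}(1/\eps)}$ inherited from \cite{BKMcsp2} plus a benign overhead depending only on $|H|$ and hence on $m$. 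I expect the delicate point to be verifying that the $\hat H^n$-aggregation remains lossless in $n$; without this, the bounds degrade to $\delta = 2^{-\Omega(n)}$, which would be useless for the application.
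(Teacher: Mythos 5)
The paper does not prove this theorem at all: it is imported verbatim from \cite{BKMcsp4}, where its proof occupies a long and technically heavy argument, so there is no in-paper proof to compare against. Judged on its own merits, your sketch has a genuine gap exactly at the point you flag as ``delicate.'' Your Cauchy--Schwarz step can at best show that the \emph{total} mass $\sum_{\vec\chi}\|\widetilde f_{\vec\chi}\|_2^2$ relevant to the correlation is $\eps^{O(1)}$, but this mass is spread over $|\hat H|^n$ sectors, and no further averaging can isolate a \emph{single} tuple $\vec\chi$ carrying $\eps^{O(1)}$ mass: a pigeonhole over exponentially many sectors necessarily loses the factor $|\hat H|^{-n}$, i.e.\ exactly the $\delta=2^{-\Omega(n)}$ degradation you say would be useless. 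Producing one fixed product function $\prod_i\chi_i(\sigma(x_i))$ (times a low-degree $L$) with $n$-independent correlation is not a bookkeeping step around the inverse theorem of \cite{BKMcsp2}; it is the main content of \cite{BKMcsp4}, and the argument there does not proceed by summing sector-wise applications of \cite{BKMcsp2}.

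Two further points would also need repair even before that step. First, the expansion $f(x)=\sum_{\vec\chi}\widetilde f_{\vec\chi}(x)\prod_i\overline{\chi_i(\sigma(x_i))}$ is not a well-defined orthogonal decomposition: $\sigma\colon\Sigma\to H$ is merely a map, generally non-injective and non-surjective, so the functions $\chi\circ\sigma$ need neither span $L_2(\Sigma)$ nor be linearly independent, and ``invariance under the $\hat H$-action through $\sigma$'' does not single out canonical components. Second, even granting a decomposition, largeness of the $\ell_2$-mass of a sector is not what \cite{BKMcsp2} consumes; you would need the \emph{triple-product} term $\E[\widetilde f_{\vec\chi}\,\widetilde g_{\vec\eta}\,\widetilde h_{\vec\zeta}]$ for a single compatible tuple to be $\eps^{O(1)}$, and moreover that term is an expectation over the original $\mu^{\otimes n}$, so you must verify that the sector functions genuinely factor through quotient alphabets on which the induced distribution is pairwise connected, has atoms bounded below, and admits no non-trivial Abelian embeddings. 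The maximality construction is the right instinct for the last point, but as written the reduction does not go through.
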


\subsection{Classes of Functions}
In this section we discuss product functions $f(x_1,\ldots,x_n)$, which are functions that can be written as a product of functions
of absolute value $1$ of the individual coordinates.
\begin{definition}
  For an Abelian group $(H,+)$, a function $f\colon \Sigma^n\to\mathbb{C}$ is called a product function over $H$ if
  there are $1$-bounded functions $f_1,\ldots,f_n\colon \Sigma\to \{a\in\mathbb{C}~|~a^{\card{H}} = 1\}$ and
  a constant $c\in\mathbb{C}$ of absolute value $1$ such that
  \[
    f(x) = c\prod\limits_{i=1}^{n}f_i(x_i).
  \]
  We denote the set of product functions over $(H,+)$ with $n$-variables and alphabet $\Sigma$ by $\mathcal{P}(H,n,\Sigma)$.
\end{definition}
Often times, the alphabet $\Sigma$ will be clear from context, and we will drop it from the notation and denote the above by $\mathcal{P}(H,n)$;
we also denote by $\mathcal{P}(H)$ the union of all these collections. It is clear that the class $\mathcal{P}(H)$ is closed under
restrictions.

\subsection{Specialized Changes of Basis, Restricting the $z$-part and Closure Properties}
We will need to consider special type of changes of basis, defined below.
\begin{definition}
  Let $n'<n$ be integers. An $n'$-special basis for $\mathbb{F}_p^n$ is a basis $v_1,\ldots,v_{n'},u_1,\ldots,u_{n-n'}$
  in which $v_1,\ldots,v_{n'}$ have disjoint supports, and in their support the value of each coordinate is $1$.
\end{definition}

\begin{definition}\label{def:special_change_base}
  Let $n'<n$ be integers, $f\colon \mathbb{F}_p^n\to\mathbb{C}$ be a function and let $v_1,\ldots,v_{n'},u_1,\ldots,u_{n-n'}$
  be an $n'$-special basis for $\mathbb{F}_p^n$. We denote
  \[
  M_{\vec{u},\vec{v}}(x_1,\ldots,x_{n'},z_1,\ldots,z_{n-n'}) = \sum\limits_{i=1}^{n'} x_i v_i + \sum\limits_{i=1}^{n-n'} z_i u_i
  \]
  the corresponding change of basis transformation, and define the basis changed function $f^{\sharp}_{\vec{u},\vec{v}}\colon \mathbb{F}_p^n\to\mathbb{C}$
  as
  \[
    f^{\sharp}_{\vec{u},\vec{v}}(x_1,\ldots,x_{n'},z_1,\ldots,z_{n-n'})
    =f\left(M_{\vec{u},\vec{v}}(x_1,\ldots,x_{n'},z_1,\ldots,z_{n-n'})\right).
  \]
\end{definition}
We often remove the $v,u$ subscript whenever the specialized basis $v$ and $u$ is clear from context.
In conjunction to making a specialized change of basis, a key operation that we will often consider is restricting the $z$ part of it.
We refer to this operation as ``specialized change of basis and restricting the $z$-part'' henceforth.

A close inspection shows that
the class $\mathcal{P}(H)$ is also closed under specialized changes of basis and restricting the $z$-part, and we will use this fact numerous times.
For the sake of completeness, we include a proof of this fact below.
\begin{claim}
  Let $(H,+)$ be an Abelian group. Then the class $\mathcal{P}(H)$ is closed under specialized changes of basis and restricting the $z$-part.
\end{claim}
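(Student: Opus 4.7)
The plan is to unpack the definitions and observe that, after a specialized change of basis, each coordinate of the resulting vector either depends on exactly one of the $x$-variables (plus a function of $z$), or depends only on $z$. Once $z$ is fixed, the latter group contributes a multiplicative constant of modulus one, while the former regroups cleanly into a product of single-variable functions in $x_1,\ldots,x_{n'}$.

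Concretely, write $f\in\mathcal{P}(H,n)$ as $f(w)=c\prod_{k=1}^n f_k(w_k)$ with $|c|=1$ and each $f_k(\cdot)^{|H|}\equiv 1$. Let $v_1,\ldots,v_{n'},u_1,\ldots,u_{n-n'}$ be an $n'$-special basis and set $w=M_{\vec u,\vec v}(x,z)$. Using that the supports $S_i := \mathrm{supp}(v_i)$ are pairwise disjoint and $v_i$ is $\{0,1\}$-valued, the coordinate $w_k$ equals $x_i+H_k(z)$ when $k\in S_i$ (for the unique such $i$) and equals $H_k(z)$ when $k\notin\bigcup_i S_i$, where $H_k(z):=\sum_{j=1}^{n-n'}z_j(u_j)_k$ is a linear form in $z$. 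Grouping the product accordingly,
\[
f^{\sharp}_{\vec u,\vec v}(x,z)
= c\prod_{k\notin\bigcup_i S_i} f_k(H_k(z))\;\cdot\;\prod_{i=1}^{n'}\prod_{k\in S_i} f_k\bigl(x_i+H_k(z)\bigr).
\]

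Now restrict the $z$-part by fixing $z=z^*$. The first product becomes a scalar $c^*\in\mathbb{C}$, and since each factor $f_k(H_k(z^*))$ has modulus one, so does $c^*$; hence $|cc^*|=1$. For the second product, define
\[
\tilde f_i(x_i) := \prod_{k\in S_i} f_k\bigl(x_i+H_k(z^*)\bigr),\qquad i=1,\ldots,n',
\]
so that $f^{\sharp}_{\vec u,\vec v}(\,\cdot\,,z^*)=(cc^*)\prod_{i=1}^{n'}\tilde f_i(x_i)$. Each $\tilde f_i$ is a function of the single variable $x_i$, is $1$-bounded (product of $1$-bounded factors), and satisfies
\[
\tilde f_i(x_i)^{|H|}=\prod_{k\in S_i} f_k\bigl(x_i+H_k(z^*)\bigr)^{|H|}=1,
\]
because each factor is an $|H|$-th root of unity. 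Therefore $f^{\sharp}_{\vec u,\vec v}(\,\cdot\,,z^*)\in\mathcal{P}(H,n',\mathbb{F}_p)$, as required.

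There is no real obstacle here; the only subtlety is making sure that the disjoint $\{0,1\}$-support condition on the $v_i$'s (built into the definition of an $n'$-special basis) is used twice: once to guarantee that a coordinate $k\in S_i$ contributes the variable $x_i$ unmixed with other $x_j$'s, and once to ensure the coefficient of $x_i$ at such coordinates is exactly $1$, so that the $|H|$-th power of each $\tilde f_i$ remains identically one. The rest is a direct verification against the definition of $\mathcal{P}(H)$.
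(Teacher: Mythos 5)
Your proof is correct and takes essentially the same route as the paper's own argument: expand the product coordinate-wise under the change of basis, note that each coordinate is either some $x_i$ shifted by a linear form in $z$ or a function of $z$ alone, and regroup after fixing $z$ into a modulus-one constant times single-variable factors. If anything you are more careful than the paper in checking the modulus-one and root-of-unity conditions; the only tiny nitpick is that the coefficient-$1$ property of the $v_i$'s is not what makes $\tilde f_i^{\,|H|}\equiv 1$ (any regrouping of $|H|$-th roots of unity has that property) — that property of the special basis is needed elsewhere in the paper, not for this closure claim.
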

\begin{proof}
  Let $P\in \mathcal{P}(\mathbb{F}_p, H,n)$, let $v_1,\ldots,v_{n'},u_1,\ldots,u_{n-n'}$ be a specialized basis
  and let $M_{\vec{v},\vec{u}}$ be the corresponding change of basis transformation. Write $P(x) = \prod\limits_{i=1}^{n} f_i(x_i)$
  and denote $w = M_{\vec{v},\vec{u}}(x,z)$.
  For $j=1,\ldots,n'$ and $i\in {\sf supp}(v_j)$ we have that $w_i = x_j + L_i(z)$
  where $L_i(z)$ is a linear function of $z$. For $i\not\in\bigcup_{j}{\sf supp}(v_j)$, we have that $w_i = L_i(z)$
  where again $L_i(z)$ is a linear function of $z$.
  It follows that
  \[
  (P\circ M_{\vec{v},\vec{u}})(x,z)
  =P(w)
  =\prod\limits_{i=1}^{n'} f_i(x_{j_i} + L_i(z))
  \cdot C(z),
  \]
  where $j_i$ is the unique $j$ such that $i\in {\sf supp}(v_j)$, and $C(z)$ is a complex number depending only on
  $z$. It follows that
  \[
  (P\circ M_{\vec{v},\vec{u}})|_{[n]\setminus[n']\rightarrow z}(x)
  =C(z)\prod\limits_{j=1}^{n'}f_j(x_j)
  \]
  where $f_j(x_j) = \prod\limits_{i\in {\sf supp}(v_j)} f_i(x_{j_i}+L_i(z))$, so
  $(P\circ M_{\vec{v},\vec{u}})|_{[n]\setminus[n']\rightarrow z}\in \mathcal{P}(\mathbb{F}_p, H, n')$.
\end{proof}

Another important property of specialized changes of basis that we need is that, after restricting the $z$-part, they preserve functions that are restricted $3$-AP free.
\begin{claim}\label{claim:special_pres_free}
  Let $p$ be a prime, let $1<n'<n$, and let $f\colon\mathbb{F}_p^n\to \{0,1\}$ be a restricted $3$-AP free function.
  Then for every $n'$-specialized basis $v_1,\ldots,v_{n'}$, $u_1,\ldots,u_{n-n'}$, looking at the
  function $f_{\vec{u},\vec{v}}^{\sharp}$ from Definition~\ref{def:special_change_base}, it holds that for all $z\in\mathbb{F}_p^{n-n'}$,
  $(f_{\vec{u},\vec{v}}^{\sharp})_{[n]\setminus [n']\rightarrow z}$ is a restricted $3$-AP free function.
\end{claim}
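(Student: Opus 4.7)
The plan is to argue by contradiction and unwind the definitions. Suppose some $z \in \mathbb{F}_p^{n-n'}$ witnesses a restricted $3$-AP in $(f^{\sharp}_{\vec{u},\vec{v}})_{[n]\setminus[n']\rightarrow z}$; that is, there exist $x \in \mathbb{F}_p^{n'}$ and $a \in \{0,1,2\}^{n'}\setminus\{\vec{0}\}$ such that the restricted function equals $1$ at each of $x$, $x+a$, and $x+2a$. Since $M_{\vec{u},\vec{v}}(\,\cdot\,, z)$ is affine-linear in its first argument,
\[
M_{\vec{u},\vec{v}}(x+ta, z) \;=\; M_{\vec{u},\vec{v}}(x, z) + t\sum_{i=1}^{n'} a_i v_i \qquad \text{for } t \in \{0,1,2\},
\]
so $f$ takes value $1$ at $y$, $y+b$, $y+2b$, where $y := M_{\vec{u},\vec{v}}(x,z)$ and $b := \sum_{i=1}^{n'} a_i v_i$.

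The bulk of the work is then to verify that $(y,b)$ is a legitimate restricted $3$-AP for $f$, i.e., $b \in \{0,1,2\}^n \setminus \{\vec{0}\}$. Because the vectors $v_1, \ldots, v_{n'}$ have pairwise disjoint supports and take value $1$ on their supports, the $k$-th coordinate of $b$ equals $a_i$ whenever $k$ lies in ${\sf supp}(v_i)$ for the unique such $i$, and equals $0$ otherwise. Since each $a_i \in \{0,1,2\}$, this already gives $b \in \{0,1,2\}^n$. Moreover, as $a \neq \vec{0}$, some $a_i$ is non-zero, and for any $k \in {\sf supp}(v_i)$ (non-empty because $v_i$ is a basis vector) we then have $b_k = a_i \neq 0$, so $b \neq \vec{0}$. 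This contradicts the restricted $3$-AP freeness of $f$ and completes the proof.

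There is no real obstacle beyond a careful reading of the definitions, but it is worth flagging that both structural requirements on a specialized basis, namely disjointness of the supports of the $v_i$ \emph{and} the $\{0,1\}$-valuedness on those supports, are used in an essential way: without disjointness, contributions from distinct $a_i$'s could overlap at a coordinate and sum to a value outside $\{0,1,2\}$, while without the $\{0,1\}$-valuedness, even a single $a_i v_i$ could itself escape $\{0,1,2\}$. In this precise sense the definition of ``specialized basis'' is exactly what is required to transfer restricted $3$-APs back to $\mathbb{F}_p^n$, and this is why one must avoid arbitrary changes of basis in the density increment.
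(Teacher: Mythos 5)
Your proof is correct and takes essentially the same route as the paper's: assume a restricted $3$-AP survives the restriction, push it through the affine map $M_{\vec{u},\vec{v}}(\cdot,z)$, and check that $b=\sum_{i} a_i v_i$ lies in $\{0,1,2\}^n\setminus\{\vec{0}\}$ using the disjoint supports and $\{0,1\}$-valuedness of the $v_i$. Your verification of the common difference is in fact spelled out more carefully than the paper's one-line version (which writes $a\in\{0,1\}^{n'}$ where $\{0,1,2\}^{n'}$ is meant), so there is nothing to fix.
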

\begin{proof}
  Suppose that this is not the case. Then there are $x,z$ and $a\in\{0,1\}^{n'}$ not identically $0$
  such that $f_{\vec{u},\vec{v}}^{\sharp}(x,z) = f_{\vec{u},\vec{v}}^{\sharp}(x+a,z) = f_{\vec{u},\vec{v}}^{\sharp}(x+2a,z) = 1$,
  which gives the following restricted $3$-AP in $f$:
  $x', x' + a', x' + 2a'$ where $x' = \sum\limits_{i=1}^{n'}x_i v_i + \sum\limits_{i=1}^{n-n'}z_iu_i$
  and $a' = \sum\limits_{i=1}^{n'} a_i v_i\in \{0,1\}^n\setminus\set{0}$.
\end{proof}

\section{Density Increment Tools}
In this section we present some tools that will help us in the density increment argument.
Specifically, in our case we will have a function $f\colon \mathbb{F}_p^n\to\{0,1\}$ such that for $\tilde{f} = f-\E[f]$ it holds that
\begin{equation}\label{eq:dit_1}
\card{\E_{x,a}[\tilde{f}(x)f(x+a)f(x+2a)]}\geq \eps.
\end{equation}
From this fact, we wish to conclude from that after some operations (which for us will be restrictions and changes of basis), we can find
a function $g\colon \mathbb{F}_p^{n'}\to\{0,1\}$ such that:
\begin{enumerate}
  \item $\E[g]\geq \E[f] + \Omega_{\eps}(1)$
  \item If $f$ is restricted $3$-AP free, then $g$ is restricted $3$-AP free.
  \item $n'$ is not too much smaller than $n$.
\end{enumerate}
Towards this end, Theorem~\ref{thm:stab} gives some explanation to why this is the case, asserting that $\tilde{f}$ is correlated
with a function of the form $P(x)\cdot L(x)$ where $P\in\mathcal{P}(H)$ and $L$ is a low-degree function. In this section, we
develop tools that convert such information into a density increment from $f$ as described above.

\paragraph{High level idea, ignoring the low-degree part.}
Ignoring the low-degree part, the basic idea is to partition the space $\mathbb{F}_p^n$ into parts
on which the function $P$ is constant. Given such partition, the fact that $f$ is correlated with
$P$ says that the average of $f$ cannot be the same on all parts, and therefore there must be some variance in it.
In particular, there must be significant number of parts where the density of $f$ noticeably increases, hence we get
a density increment for $f$ satisfying the first and third item above.
To get the second property, however, we need to chosen a partition that interacts well with the problem in
hand. In the standard $3$-AP free set problem, it suffices that the partition would be into subspaces.
However, in our case of restricted $3$-AP free sets, more careful structure is needed in order to
preserve the property of restricted differences.

This is the point in the argument where specialized changes of basis enter. If $v_1,\ldots,v_{n'}$,
$u_1,\ldots,u_{n-n'}$ is a specialized basis as in Definition~\ref{def:special_change_base}, then any
tuple of points of the form $\sum\limits_{i}x_iv_i + \sum\limits_{j}z_ju_j$,
$\sum\limits_{i=1}^{n'}(x_i+a_i)v_i + \sum\limits_{j=1}^{n-n'}z_ju_j$
and
$\sum\limits_{i=1}^{n'}(x_i+2a_i)v_i + \sum\limits_{j=1}^{n-n'}z_ju_j$
where $a_i\in\{0,1,2\}$ for all $i$ is a restricted $3$-AP.
Thus, we can consider the partition of $\mathbb{F}_p^n$ induced by
the affine subspaces $\{W_z\}_{z\in\mathbb{F}_p^{n-n'}}$ where $W_z$ is defined as
\[
W_z = \sett{\sum\limits_{i=1}^{n'}x_iv_i + \sum\limits_{j=1}^{n-n'}z_ju_j}{x_1,\ldots,x_{n'}\in\mathbb{F}_p}.
\]
The restriction of $f$ to each $W_z$ can be viewed as a function on $\mathbb{F}_p^{n'}$ which is restricted $3$-AP free (if $f$ is restricted $3$-AP)
free. Thus, specialized changes of basis will give us the type of partitions we need to facilitate a density increment argument. To carry out our argument,
though, we will need to robustify the correlation between $f$ and $P$. By that, we mean that the restrictions of $f$ and $P$ to $W_z$ where $z$ is chosen
randomly remain correlated with probability close to $1$, and in Lemma~\ref{lem:list_decode} we show that this is achievable.

\paragraph{Re-introducing the low-degree part.}
In our setting though, we have to also address the low-degree part and to handle them we first apply random restrictions. Roughly speaking,
we show that if $f$ is correlated with $P\cdot L$, then after a suitable random restriction, with noticeable probability $f$ is correlated with $P'$,
where $P'$ is a restriction of $P$, and then use the above idea. A subtle point one always has to keep in mind is that we never wish the density
of $f$ to drop by too much as a result of a random restriction -- indeed we want it to roughly be the same, so that using the above idea we will
eventually get a density increment. This is easy to handle, as we show that if the density of $f$ noticeably drops with significant probability
as a result of a random restriction, then it must be the case that its density also noticeably increases with significant probability, in which
case we are automatically done finding a density increment.

\subsection{Random Restrictions and Correlations}
The following lemma asserts that if $f$ is correlated with a function of the form $P\cdot L$ where $\norm{L}_2\leq 1$ and $L$ is a low-degree function,
then after random restriction $f$ is correlated with $P'$ where $P'$ is a restriction of $P$.
\begin{lemma}\label{lem:find_corr_restrict}
  Suppose that $f\colon (\Sigma^n,\mu^{\otimes n})\to\mathbb{C}$ is $1$-bounded,
  $P\in\mathcal{P}(H,n,\Sigma)$ and
  $L\colon \Sigma^n\to\mathbb{C}$ has degree at most $d$ and $2$-norm at most $1$.
  If $\card{\inner{f}{P\cdot L}}\geq \delta$, then taking $(I,y)$ a random restriction that fixes
  a coordinate with probability $1-1/2d$, with probability at least $\frac{\delta^2}{2e}$,  we
  have that
  \[
  \card{\inner{f_{I\rightarrow y}}{P|_{I\rightarrow y}}}\geq  \frac{\delta}{\sqrt{2e}}.
  \]
\end{lemma}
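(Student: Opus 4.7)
The plan is to reduce Lemma~\ref{lem:find_corr_restrict} to the already-proved Lemma~\ref{lem:rest_to_correlation}, exploiting the fact that any product function $P\in\mathcal{P}(H)$ is pointwise unit-modulus (each factor takes values on the $\card{H}$-th roots of unity). Set $g = f\cdot \overline{P}$; then $g$ remains $1$-bounded, and using $P\overline{P}\equiv 1$ the identity
\[
\inner{g}{L} = \Expect{x}{f(x)\overline{P(x)L(x)}} = \inner{f}{P\cdot L}
\]
gives $\card{\inner{g}{L}}\geq \delta$.

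Next I would convert this correlation into a lower bound on the low-degree mass of $g$. Since $L$ has degree at most $d$, $L = L^{\leq d}$, so the Efron-Stein decomposition together with Cauchy-Schwarz gives
\[
\delta \leq \card{\inner{g}{L}} = \card{\inner{g^{\leq d}}{L}} \leq \norm{g^{\leq d}}_2\cdot \norm{L}_2 \leq \sqrt{W_{\leq d}[g]},
\]
yielding $W_{\leq d}[g]\geq \delta^2$.

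Now I would apply Lemma~\ref{lem:rest_to_correlation} to $g$ with $\xi = \delta^2$. The random restriction in the present statement (each coordinate fixed with probability $1-1/2d$) coincides with the one in Lemma~\ref{lem:rest_to_correlation} up to relabeling of alive/fixed coordinate sets, so that lemma yields: with probability at least $\delta^2/(2e)$, one has $\card{\E[g_{I\rightarrow y}]}\geq \sqrt{\delta^2/(2e)} = \delta/\sqrt{2e}$. Finally, unpacking the definition, $g_{I\rightarrow y}(x) = f_{I\rightarrow y}(x)\cdot \overline{P|_{I\rightarrow y}(x)}$, so $\E[g_{I\rightarrow y}] = \inner{f_{I\rightarrow y}}{P|_{I\rightarrow y}}$, which is the sought-after correlation.

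I do not anticipate a serious obstacle: the entire argument rests on the elementary observation that $\card{P}\equiv 1$, so multiplying by $\overline{P}$ preserves $1$-boundedness and commutes with restrictions, and everything else is a direct invocation of Lemma~\ref{lem:rest_to_correlation}. The only point needing care is the bookkeeping between the two conventions for random restrictions across the two lemmas, together with the basic sanity check that restricting and multiplying by $\overline{P}$ commute so that the restriction of $g$ factors as $f_{I\rightarrow y}\cdot \overline{P|_{I\rightarrow y}}$.
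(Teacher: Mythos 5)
Your proposal is correct and follows essentially the same route as the paper: the paper likewise passes to $f\overline{P}$, uses Cauchy--Schwarz against the degree-$d$ function $L$ to conclude $W_{\leq d}[f\overline{P}]\geq \delta^2$, and then invokes Lemma~\ref{lem:rest_to_correlation} to get the stated correlation after the random restriction. The bookkeeping points you flag (unit modulus of $P$ and compatibility of restriction with multiplication by $\overline{P}$) are exactly the facts the paper uses implicitly.
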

\begin{proof}
  We have
  \[
  \delta
  \leq
  \card{\inner{f \overline{P}}{L}}
  =
  \card{\inner{(f \overline{P})^{\leq d}}{L}}
  \leq \norm{(f \overline{P})^{\leq d}}_2\norm{L}_2
  \leq \norm{(f \overline{P})^{\leq d}}_2
  \]
  where we used Cauchy-Schwarz and the fact that the $2$-norm of $L$ is at most $1$. Thus,
  $W_{\leq d}[f \overline{P}]\geq \delta^2$. Using Lemma~\ref{lem:rest_to_correlation} now
  we get that choosing a random restriction as in the statement of the lemma,
  with probability at least $\frac{\delta^2}{2e}$ we have that
  \[
  \frac{\delta}{\sqrt{2e}}
  \leq
  \card{\E[f_{\overline{I}\rightarrow y} \overline{P}_{\overline{I}\rightarrow y}]}
  =
  \card{\inner{f_{\overline{I}\rightarrow y}}{P_{\overline{I}\rightarrow y}}}.
  \qedhere
  \]
\end{proof}

\subsection{Robustifying Correlations}\label{sec:list_decode}
To facilitate a density increment argument, we need to robustify the correlation of $f$ with $P$, and in fact
the robustification we need is with respect to a somewhat more complicated $2$-step random restriction process.
In the most simplistic form of robustifying correlations (which is insufficient for our purposes),
one wants the correlation $\card{\inner{f}{P}}$ to almost always remain bounded away from $0$ under applications random restrictions.
Indeed, such robustification is possible to achieve by passing to restrictions of $f$ and $P$ so long as one can increase the correlation
by doing so. The idea is that if we have functions $f$, $P$
for which the correlation of random restriction drops to be close to $0$ with noticeable probability, then with noticeable probability the
correlation of random restrictions
must also increase by a significant amount.

The more complicated form of robustification that we need proceeds as follows: one first applies any specialized change of basis,
then restricts the $z$-part according to some setting $z$ randomly. After that, a large set of coordinates $I_z$ is chosen out of the live
variables by an adversary, and then the variables outside $I_z$ are fixed randomly according to $z'$. The notion of robust correlation
we need asserts that, even after such restriction process, except for small probability, the correlation between the restrictions
of $f$ and $P$ is still noticeable. Below is a formal statement.
\begin{lemma}\label{lem:list_decode}
  For all $p\geq 3$ prime, $r\in\mathbb{N}$, $\eps>0$ and $\delta>0$, there are $\beta_0$ and $\gamma>0$ such that the following holds
  for all $0<\beta\leq \beta_0$.
  Let $(H,+)$ be a finite Abelian group of size at most $r$,
  let $f\colon (\mathbb{F}_p^n,\mu^{\otimes n})\to [-1,1]$ be a function
  where $\mu$ is the uniform distribution over $\mathbb{F}_p$,
  let $P\in\mathcal{P}(H,n,\mathbb{F}_p)$
  and suppose that
  $\card{\inner{f}{P}}\geq \eps$.

  Then at least one of the following cases holds:
  \begin{enumerate}
    \item Density bump: there is $n'\geq \gamma n$ and $f'\colon \mathbb{F}_p^{n'}\to[-1,1]$ which is the result of a sequence of restrictions,
    and specialized changes of basis + restricting the $z$-part on $f$, such that $\E[f']\geq \E[f] + \beta$.
    \item There is $n'\geq \gamma n$,
    a function $f'\colon \mathbb{F}_p^{n'} \to [-1,1]$ and
    $P'\in \mathcal{P}(H,n',\mathbb{F}_p)$, which are the result of a sequence of restrictions
    and specialized changes of basis + restricting the $z$-part on $f$ and $P$, such that
    \begin{enumerate}
      \item For all $n''\geq n'/r^{10p}$, for all $n''$-special basis and $v_1,\ldots,v_{n''}, u_1,\ldots,u_{n'-n''}$,
      letting $M_{\vec{u},\vec{v}}$ be the basis change transformation and looking at
      $f'^{\sharp} = f'\circ M_{\vec{u},\vec{v}}$
      and
      $P'^{\sharp} = P'\circ M_{\vec{u},\vec{v}}$,
      \[
      \Prob{\substack{z\sim \mu^{n'-n''}\\ z'\sim \mu^{n''}}}
      {
      \exists I_z\subseteq [n']\setminus [n'-n''], \card{I_z}\geq p^{-100r}n'',
      \card{
      \inner{f'^{\sharp}_{\substack{[n'-n'']\rightarrow z\\\bar{I_z}\rightarrow z'_{\bar{I_z}}}}}
      {P'^{\sharp}_{\substack{[n'-n'']\rightarrow z\\\bar{I_z}\rightarrow z'_{\bar{I_z}}}}}
      }
      \leq \frac{\eps}{2}
      }
      \leq \delta.
      \]
      \item The average of $f'$ is almost as large as of $f$: $\E[f']\geq \E[f] - \sqrt{\beta}$.
    \end{enumerate}
  \end{enumerate}
  Quantitatively, one can take
  \[
  \gamma = \left(p^{-100r} r^{-10p}\right)^{\frac{4}{\delta\eps}},
  \qquad
  \beta_0 = \left(\frac{\eps \delta}{8}\right)^{\frac{40}{\delta\eps}}.
  \]
\end{lemma}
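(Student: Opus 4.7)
The strategy is iterative refinement. I initialize $(f_0,P_0)=(f,P)$ on $\mathbb{F}_p^n$, and maintain a sequence of pairs $(f_t,P_t)$ on $\mathbb{F}_p^{n_t}$ obtained from $(f,P)$ by a sequence of restrictions and specialized changes of basis + restricting the $z$-part, preserving the invariant $|\langle f_t,P_t\rangle|\ge\eps$. At each step, if condition (2a) of the lemma is satisfied for $(f_t,P_t)$, I stop and return it as $(f',P')$, outputting case 2 (or case 1 if the density has already bumped up by $\beta$ along the way). Otherwise, I use the failure of (2a) to pass to a new pair $(f_{t+1},P_{t+1})$ on a smaller $\mathbb{F}_p^{n_{t+1}}$ with strictly larger squared correlation. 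Since the squared correlation is bounded by $1$ and (as argued below) increases by a multiplicative factor $1+c\delta$ at each step, the iteration terminates within $T=O(\log(1/\eps)/\delta)=O(1/(\delta\eps))$ steps, and each step shrinks the dimension by at most a factor $\rho=p^{-100r}r^{-10p}$, giving final dimension at least $n\rho^T=\gamma n$.

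The main correlation-improvement step uses a second-moment / Cauchy--Schwarz argument. When (2a) fails, there are a dimension $n_t''\ge n_t/r^{10p}$ and a specialized basis such that with probability $>\delta$ over $(z,z')$, some $I_z$ of size $\ge p^{-100r}n_t''$ makes the restricted inner product have absolute value at most $\eps/2$. For any \emph{fixed} $I$ of the required size, the averaging property of random restrictions together with the measure-preservation of specialized changes of basis gives that the expectation over $(z,z')$ of the restricted inner product $X_I(z,z')$ equals $\langle f_t,P_t\rangle$, so $\E_{z,z'}[|X_I(z,z')|^2]\ge\eps^2$ by Cauchy--Schwarz. I handle the adversarial nature of $I_z$ by a case split on which subtype of $I_z$ dominates the bad event: if the ``maximal'' choice $I_z=[n_t-n_t''+1,n_t]$ (the entire alive-variable set after the $z$-restriction) already carries bad probability $>\delta/2$, then a direct second-moment argument on $z$ together with the pointwise bound $|X|\le 1$ gives a positive-density set of $z$ with $|\langle F_z,G_z\rangle|^2\ge\eps^2(1+c\delta)$; otherwise the bad event is dominated by proper-subset $I_z$, and I condition on the ``good'' $z$ and apply a further second-moment argument to the $(z',I_z)$ restriction, again yielding a positive-density set of restrictions with the desired correlation boost.

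The density bookkeeping runs in parallel. At each iteration, I first check whether some restriction under consideration yields density at least $\E[f]+\beta$; if so, I exit with case 1. Otherwise, by expectation preservation $\E_{\text{restr}}[\E[f_{\text{restr}}]]=\E[f_t]$ and Markov's inequality applied to the non-negative random variable $\E[f_t]+\beta-\E[f_{\text{restr}}]$, the ``bad density'' set (where the density drops by more than $\sqrt{\beta}/T$) has probability at most $\sqrt{\beta}T/(1+\sqrt{\beta}T)$, and its complement intersects non-trivially with the correlation-improvement set extracted in the previous paragraph (which has density $\Omega(\eps^2\delta)$). Selecting $(f_{t+1},P_{t+1})$ from the intersection controls the per-iteration density drop by $\sqrt{\beta}/T$, giving a total drop of at most $\sqrt{\beta}$ over all $T$ iterations, as required for case 2.

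The main obstacle is the adversarial structure of $I_z$ in the bad event of (2a): because the bad event asserts the existence of \emph{some} $I_z$ rather than a fixed one, the second-moment argument for a single fixed $I$ does not immediately apply, and the case split sketched above is essential to reduce the adversarial statement to a union of fixed-$I$ claims that can each be attacked by $\E[|X_I|^2]\ge\eps^2$. A secondary quantitative point is the balancing of three small parameters --- the improvement probability (of order $\eps^2\delta$), the per-iteration density drop bound ($\sqrt{\beta}/T$), and the iteration count $T=O(1/(\delta\eps))$ --- and the resulting constraints force $\beta_0$ to depend exponentially on $1/(\delta\eps)$, giving the claimed bound $\beta_0=(\eps\delta/8)^{40/(\delta\eps)}$.
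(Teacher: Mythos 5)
Your overall strategy is the same as the paper's: iterate, and whenever condition (2a) fails for the current pair $(f_t,P_t)$, use the witnessing basis and the failure event to pass to a restriction with noticeably larger correlation, while controlling the density loss per step by Markov's inequality and exiting with case 1 whenever the density ever exceeds $\E[f]+\beta$; termination follows since the correlation is bounded by $1$, and the dimension loss per step is at most a factor $p^{-100r}r^{-10p}$. However, the central step --- converting the failure of (2a) into a correlation boost --- is where your write-up has a genuine gap. Your case split on whether the bad event is ``dominated'' by the maximal choice $I_z=$ (all alive coordinates) or by proper subsets does not resolve the adversarial dependence you yourself identify as the main obstacle: in the proper-subset branch you only say ``condition on the good $z$ and apply a further second-moment argument to the $(z',I_z)$ restriction,'' but the identity $\E_{z'}[X_I(z,z')]=\langle F_z,G_z\rangle$ (and hence the second-moment lower bound) is valid only for a set $I$ fixed independently of $z'$; there are exponentially many candidate sets, so no union bound is available, and if the witness is allowed to vary with $z'$ the averaging identity simply fails because the indicator of the chosen witness and $X_I$ are correlated through $z'$. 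Moreover, even granting a fixed witness per $z$, conditioning on a single $z$ only compares $|X|$ against $|\langle F_z,G_z\rangle|$, which for typical $z$ may be well below $|\langle f_t,P_t\rangle|$, so the boost you extract is not measured against the quantity your invariant tracks; you would need an additional argument combining the fluctuation of $|\langle F_z,G_z\rangle|$ over $z$ with the conditional deficit over $z'$, and you do not supply it.

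The paper avoids all of this with one observation that makes your case split unnecessary: fix, for each $z$, one witness $I_z$ (and, say, the full alive set when no witness exists), and define $Y_{z,z'}$ with that choice. Since $I_z$ does not depend on $z'$, one has $\E_{z'}[Y_{z,z'}]=\langle F_z,G_z\rangle$ for every $z$, hence $\E_{z,z'}[Y_{z,z'}]=\langle f_t,P_t\rangle$, and a single global first-moment computation ($\E|Y|\ge|\E Y|\ge\eps$, while $|Y|\le\eps/2$ with probability $\ge\delta$ and $|Y|\le 1$) yields $|Y|\ge|\langle f_t,P_t\rangle|+\tfrac{\eps\delta}{4}$ with probability $\Omega(\eps\delta)$, which is then intersected with the high-density event exactly as in your third paragraph. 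Your remaining bookkeeping differences are harmless: an additive increment of $\tfrac{\eps\delta}{4}$ versus your multiplicative $(1+c\delta)$ on the squared correlation both give $O(1/(\delta\eps))$ rounds, and a uniform per-step density-drop threshold $\sqrt{\beta}/T$ versus the paper's schedule $\beta^{1-2j/N}$ both work (though you should anchor the Markov argument at the case-1 exit threshold $\E[f]+\beta$, not at $\E[f_t]+\beta$, to keep the random variable nonnegative). But as written, the proper-subset branch of your key step is asserted rather than proved, and that is precisely where the content of the lemma lies.
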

\begin{proof}
  First, we pick parameters
  \[
  N = \frac{4}{\delta \eps},
  \qquad
  \beta_0 = \left(\frac{\eps \delta}{8}\right)^{10N},
  \qquad
  \gamma = \left(p^{-100r} r^{-10p}\right)^{N}
  \]
  If the first bullet holds for $\gamma$ then we are done, so assume otherwise. Henceforth $0<\beta\leq \beta_0$.
  We prove that then the second bullet holds. The argument will be iterative, and we will construct a sequence of functions
  $f_1,\ldots,f_m$ where $f_{j+1}$ is a result of restrictions and specialized changes of basis + restricting the $z$ part applied on $f_j$,
  as well as
  functions $P_1,\ldots,P_m$ where $P_{j+1}$ is a result of restrictions and specialized changes of basis + restricting the $z$ part applied on $P_{j}$.
  Our iterative process starts with $f_1 = f$ and $P_1=P$.
  We show that if $f_j,P_j$ violate the second bullet, then we may construct from them
  $f_{j+1}$ and $P_{j+1}$ such that $\E[f_{j+1}] \geq \E[f]-\beta^{1-2j/N}$
  and
  $\card{\inner{f_{j+1}}{P_{j+1}}}\geq \card{\inner{f_{j}}{P_{j}}}+j\frac{\eps\delta}{4}$.
  As $\card{\inner{f_{j+1}}{P_{j+1}}}\leq 1$ always, the process must terminate
  after at most $N$ steps.

  Suppose we have $f_j, P_j$ that violate the second bullet, let $n_j$ be the number of variables,
  and take $n''\geq r^{-10pm} n_j$ for which the condition is violated. Thus, we may find an $n''$-specialized change of basis
  $v_1,\ldots,v_{n''}$ and $u_1,\ldots,u_{n_j-n''}$ for which the second item fails, and we denote
  $f_j^{\sharp} = f_j\circ M_{\vec{u},\vec{v}}$ and $P_j^{\sharp} = P_j\circ M_{\vec{u},\vec{v}}$, as well as $n''' = p^{-100r} n''$.
  We say $z\in \mathbb{F}_p^{n'-n''}$ is bad if there exists $I_z$ such that in the event in the second item holds.
  For each bad $z$ we fix $I_z$ as therein. If $z$ is not bad, we set $I_z = \emptyset$.

  Note that $\E[f_j^{\sharp}] = \E[f_j] \geq \E[f] - \beta^{1-2j/N}$. Further note that
  \[
  \Expect{z\sim \mu^{n_j-n''}, z'\sim \mu^{n''}}{\E[f'^{\sharp}_{\substack{[n_j-n'']\rightarrow z\\\bar{I_z}\rightarrow z'_{\bar{I_z}}}}]}
  =
  \Expect{z\sim \mu^{n_j-n''}}{\E[f'^{\sharp}_{[n_j-n'']\rightarrow z}]}
  =
  \E[f_j^{\sharp}]
  \geq \E[f] - \beta^{1-2j/N}.
  \]
  Let $E_1$ be the event that
  $\E[{f_j}^{\sharp}_{\substack{[n_j-n'']\rightarrow z\\\bar{I_z}\rightarrow z'_{\bar{I_z}}}}]\leq \E[f] - \beta^{1-2(j+1)/N}$.
  If
  $\E[{f_j}^{\sharp}_{\substack{[n_j-n'']\rightarrow z\\\bar{I_z}\rightarrow z'_{\bar{I_z}}}}]\geq \E[f] + \beta$
  for some $z$ and $z'$, then we are
  done as we found a function as in the first item. Thus, we assume henceforth that there are no such $z,z'$.

  By Markov's inequality
  \[
  \Prob{}{E_1}
  =
  \Prob{z,z'}{
  \E[f]+\beta - \E[{f_j}^{\sharp}_{\substack{[n'-n'']\rightarrow z\\\bar{I_z}\rightarrow z'_{\bar{I_z}}}}]
  \geq \beta + \beta^{1-2(j+1)/N}}
  \leq \frac{\beta+\beta^{1-2j/N}}{\beta + \beta^{1-2(j+1)/N}}
  \leq 2\beta^{2/N}.
  \]
  Next, consider the random variable $Y_{z,z'} = \inner{{f_j}^{\sharp}_{\substack{[n'-n'']\rightarrow z\\\bar{I_z}\rightarrow z'_{\bar{I_z}}}}}{{P_j}^{\sharp}_{\substack{[n'-n'']\rightarrow z\\\bar{I_z}\rightarrow z'_{\bar{I_z}}}}}$, and let $E_2$ be the event that $\card{Y_{z,z'}}\geq \card{\inner{f_j}{P_j}} + \frac{\eps\delta}{4}$.
  We note that
  \[
  \Expect{z,z'}{\card{Y_{z,z'}}}
  \geq
  \card{\Expect{z,z'}{Y_{z,z'}}}
  =\card{\inner{f_j}{P_j}}.
  \]
  As $\card{\inner{f_j}{P_j}} \geq \eps$ and by assumption $\card{Y_{z,z'}}\leq \eps/2$ with probability at least $\delta$, it follows that
  $\card{Y_{z,z'}}\geq \card{\inner{f_j}{g_j}} + \frac{\eps\delta}{4}$ with probability at least $\frac{\eps\delta}{4}$. Thus, $\Prob{}{E_2}\geq \frac{\eps\delta}{4}$.

  We get that $\Prob{}{\overline{E_1}\cap E_2}\geq \frac{\eps\delta}{4}-2\beta^{2/N} > 0$, and so we may find $z,z'$ for which both $\overline{E_1}$ and
  $E_2$ hold, and these $z,z'$ give us $f_{j+1} = {f_j}^{\sharp}_{\substack{[n_j-n'']\rightarrow z\\\bar{I_z}\rightarrow z'_{\bar{I_z}}}}$
  and $P_{j+1} = {P_j}^{\sharp}_{\substack{[n_j-n'']\rightarrow z\\\bar{I_z}\rightarrow z'_{\bar{I_z}}}}$ as required.
\end{proof}

\section{Proof of Main Results}
With the density increment tools in hand, we can now establish the basic density increment lemma.
Throughout this section, $\mu$ is the distribution of $(x,x+a,x+2a)$ where $x\sim\mathbb{F}_p$ and $a\sim\{0,1,2\}$
are sampled uniformly and independently. We begin with a few basic properties of $\mu$.

The first claim asserts that $\mu$ is pairwise connected, and here we use the fact that $p$ is a prime.
\begin{claim}\label{claim:mu_pairwise}
  If $p$ is prime, then $\mu$ is pairwise connected.
\end{claim}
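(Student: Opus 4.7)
The plan is to check the bipartite connectivity of each of the three pair-projections of $\mathrm{supp}(\mu)$ directly. First I would write down the edge sets: since the support of $\mu$ consists of triples $(x, x+a, x+2a)$ with $x \in \mathbb{F}_p$ and $a \in \{0,1,2\}$, the $\{1,2\}$-projection has edges $(x, x+a)$, the $\{2,3\}$-projection (after the bijective reparametrization $y = x+a$) has edges $(y, y+a)$, and the $\{1,3\}$-projection has edges $(x, x+2a)$, in each case with $a \in \{0,1,2\}$.

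For the pair $\{1,2\}$ (and symmetrically $\{2,3\}$), I would use the following two-step argument. Starting from any left vertex $x$, we can travel to a right vertex $x+a$ and then back to a left vertex $x+a-a'$ for any $a,a' \in \{0,1,2\}$, producing left-to-left shifts by any element of $\{-2,-1,0,1,2\}$. In particular $\pm 1$ is achievable, and since $1$ generates $(\mathbb{F}_p,+)$, every left vertex is connected to every other; each right vertex is then adjacent to some left vertex, establishing connectivity. This half of the argument works for every $p$.

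The remaining pair $\{1,3\}$ is where the hypothesis that $p$ is a prime enters. The analogous two-step argument only produces left-to-left shifts in $\{0, \pm 2, \pm 4\} \pmod p$, namely multiples of $2$ in $\mathbb{F}_p$. Here I would invoke primality of $p$ (specifically $p \neq 2$) to conclude that $2$ is invertible in $\mathbb{F}_p$, whence $\langle 2 \rangle = \mathbb{F}_p$ and connectivity follows just as before.

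There is no serious obstacle; the only point to watch is that pair $\{1,3\}$ is the one that genuinely needs $p$ odd, since for $p=2$ the map $a \mapsto 2a$ is trivial and the corresponding bipartite graph degenerates into the identity matching. This is consistent with the $p \geq 3$ hypothesis appearing in Lemma~\ref{lem:list_decode}, so the claim should be read as implicitly restricting to odd primes.
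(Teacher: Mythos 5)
Your proof is correct and takes essentially the same approach as the paper: reduce each pair-projection's bipartite connectivity to the fact that $1$ (for pairs $\{1,2\}$, $\{2,3\}$) and $2$ (for pair $\{1,3\}$) additively generate $\mathbb{F}_p$, which is where primality supplies the needed invertibility. Your extra observation that the $\{1,3\}$-projection degenerates to a perfect matching when $p=2$ is a valid catch --- the paper's phrase that $1$ and $2$ are ``both invertible under multiplication in $\mathbb{F}_p$'' silently presupposes $p$ odd, consistent with the $p\geq 3$ hypothesis appearing elsewhere in the paper.
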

\begin{proof}
  For $a\in \{1,2\}$, consider the bipartite graphs $G_{a} = (\mathbb{F}_p\times\{L,R\}, E_a)$
  where
  \[
  E_a = \sett{((x,L),(y,R))}{x = y \text{ or } y = x+a}.
  \]
  Note that the fact that $\mu$
  is pairwise connected is equivalent to the fact that the graphs $G_a$ are connected.
  The fact that $G_a$ is connected follows since each $a\in \{1,2\}$ generates $\mathbb{F}_p$
  by the operation of addition (as they are both invertible under multiplication in $\mathbb{F}_p$).
\end{proof}

Next, we show that $\mu$ does not admit any non-trivial Abelian embeddings into $(\mathbb{Z},+)$.
This is the part of the proof where we need the common difference to be from $\{0,1,2\}^n$
(as opposed to be from $\{0,1\}^n$).
\begin{claim}\label{claim:mu_not_Z}
  The distribution $\mu$ admit no non-trivial Abelian embeddings into $(\mathbb{Z},+)$.
\end{claim}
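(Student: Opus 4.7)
The plan is to set up the embedding equations arising from the three families of support points of $\mu$ --- namely $(x,x,x)$, $(x,x+1,x+2)$, and $(x,x+2,x+4)$ for $x\in\mathbb{F}_p$, coming from $a=0,1,2$ respectively --- and to deduce that every such embedding into the torsion-free group $(\mathbb{Z},+)$ must be constant in each coordinate. Writing out the embedding condition on each of the three families gives the identities
\begin{align*}
\sigma(x)+\gamma(x)+\phi(x) &= 0, \\
\sigma(x)+\gamma(x+1)+\phi(x+2) &= 0, \\
\sigma(x)+\gamma(x+2)+\phi(x+4) &= 0,
\end{align*}
valid for all $x\in\mathbb{F}_p$, with the arithmetic in the arguments taken in $\mathbb{F}_p$ while the values live in $\mathbb{Z}$.

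First I would subtract the first equation from the second, and the second from the third, to eliminate $\sigma$. Setting $\beta(x) := \gamma(x+1)-\gamma(x)$ and $\delta(x) := \phi(x+2)-\phi(x)$, the two subtracted relations become $\beta(x)=-\delta(x)$ and $\beta(x+1)=-\delta(x+2)$. Applying the first of these at the shifted value $x+2$ yields $\beta(x+2)=-\delta(x+2)=\beta(x+1)$, so the function $\beta$ is constant on $\mathbb{F}_p$, say $\beta\equiv b$. Summing $\gamma(x+1)-\gamma(x)=b$ telescopically as $x$ ranges over $\mathbb{F}_p$ gives $0=p\cdot b$ in $\mathbb{Z}$, hence $b=0$, so $\gamma$ is constant. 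Substituting $\beta=0$ back gives $\delta\equiv 0$, i.e.\ $\phi(x+2)=\phi(x)$ for all $x$; since $\gcd(2,p)=1$ when $p\geq 3$, the shift by $2$ acts transitively on $\mathbb{F}_p$, forcing $\phi$ to be constant. The first equation then forces $\sigma=-\gamma-\phi$ to be constant as well, contradicting non-triviality of the embedding.

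The main --- and really only non-routine --- obstacle is the combined use of all three relations to derive $\beta(x+1)=\beta(x+2)$. At exactly this point, the family of support points coming from $a=2$ plays a decisive role: were the common difference restricted to $\{0,1\}^n$, only one of the two subtracted relations would be available and non-constant solutions for $\beta$ would survive. The remaining passage from $\mathbb{F}_p$ to $\mathbb{Z}$ relies only on $\mathbb{Z}$ being torsion-free together with $2$ being invertible modulo $p$, which explains why the argument requires $p\geq 3$.
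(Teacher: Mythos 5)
Your proposal is correct and follows essentially the same route as the paper: both eliminate $\sigma$ by comparing the support triples coming from $a=0,1,2$, deduce that $\gamma(x+1)-\gamma(x)$ is constant and hence zero by summing over $\mathbb{F}_p$, then use $\phi(x+2)=\phi(x)$ together with $2$ generating $(\mathbb{F}_p,+)$ for $p\geq 3$ to conclude $\phi$, and then $\sigma$, are constant. The only cosmetic difference is that you subtract the $a=1$ and $a=2$ relations at the same base point, whereas the paper shifts the base point to $x-2$; the resulting identities are the same.
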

\begin{proof}
  Suppose that
  $\sigma\colon \mathbb{F}_p\to\mathbb{Z}$,
  $\gamma\colon \mathbb{F}_p\to\mathbb{Z}$ and
  $\phi\colon \mathbb{F}_p\to\mathbb{Z}$
  form an Abelian embedding of $\mu$ into $(\mathbb{Z},+)$.
  Then for all $x\in\mathbb{F}_p$ we have that
  $(x,x+1,x+2),(x,x,x)\in {\sf supp}(\mu)$ and
  so
  $\sigma(x) + \gamma(x+1) + \phi(x+2) = 0 = \sigma(x) + \gamma(x) + \phi(x)$.
  Hence, denoting $\partial_a \sigma(x) = \sigma(x+a) - \sigma(x)$ and similarly for $\phi$,
  we get that $\partial_1\gamma(x) + \partial_2 \phi(x)=0$.

  Also, we have that $(x-2,x-1,x), (x-2,x,x+2)\in {\sf supp}(\mu)$ for all $x\in\mathbb{F}_p$,
  hence
  $\sigma(x-2) + \gamma(x-1) + \phi(x) = 0 = \sigma(x-2) + \gamma(x) + \phi(x+2)$.
  rearranging gets $\partial_1\gamma(x-1) + \partial_2\phi(x)=0$.

  Combining the two equations, we get that $\partial_1\gamma(x) = \partial_1\gamma(x-1)$, hence
  $\partial_1\gamma(x)$ is a constant function. Noting that $\sum_{x\in\mathbb{F}_p} \partial_1\gamma(x) = 0$,
  we get that $\partial_1\gamma(x) \equiv 0$, and so $\gamma$ is constant.

  We conclude that $\partial_2 \phi(x) = 0$, so $\phi(x+2) = \phi(x)$ for all $x$. As $p$ is prime it follows that
  $\phi$ is also a constant function, and thus $\sigma$ is also constant function.

  We conclude that each one of $\sigma,\gamma$ and $\phi$ is constant, hence
  $\mu$ admit no non-trivial Abelian embeddings into $(\mathbb{Z},+)$.
\end{proof}

We also need the following auxiliary straightforward fact.
\begin{fact}\label{fact:trivial}
  Let $(H,+)$ be an Abelian group of size $r$, and let $h\colon \Sigma\to\{a~|~a^r = 1\}$.
  Then $h^{r}\equiv 1$.
\end{fact}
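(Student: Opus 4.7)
The plan is essentially to unpack the definition of the codomain of $h$. By hypothesis, every value $h(x)$ lies in the set $\{a \in \mathbb{C} \mid a^r = 1\}$ of $r$-th roots of unity, which by definition means $h(x)^r = 1$ for every $x \in \Sigma$. Defining $h^r$ to be the pointwise $r$-th power of $h$, we obtain $h^r(x) = h(x)^r = 1$ for every $x$, so $h^r \equiv 1$ as a function on $\Sigma$.

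There is no real obstacle: the fact is a direct restatement of the range restriction, and the Abelian group $(H,+)$ plays no role beyond specifying the exponent $r = |H|$. The reason to isolate it as a named fact is presumably to have a clean reference in the density increment argument, where one argues that after identifying coordinates in blocks of size (a multiple of) $|H|$ via a specialized change of basis, the product $\prod_i (\chi_i \circ \sigma)(x_i)$ becomes constant on each affine subspace $W_z$. Concretely, when $|{\sf supp}(v_k)|$ is a multiple of $r$, the contribution of the $k$-th block to such a product is of the form $h(x_k)^{|{\sf supp}(v_k)|}$ for some $h \colon \mathbb{F}_p \to \{a \mid a^r = 1\}$, and Fact~\ref{fact:trivial} is what guarantees this equals $1$.
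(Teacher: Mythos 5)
Your proof is correct and coincides with the paper's treatment: the paper states this as an unproved ``straightforward fact,'' and the only content is exactly what you observe, namely that every value of $h$ is an $r$-th root of unity, so the pointwise $r$-th power is identically $1$. Your remarks on how the fact is invoked in the density increment (to make the block products $h(x_j)^{|R_j|}$ with $|R_j|=r$ collapse to constants) also match the paper's use.
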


\subsection{The Basic Density Increment Argument}
In the next lemma we combine Theorem~\ref{thm:stab} and Lemma~\ref{lem:list_decode}
to conclude a basic density increment argument, asserting that if $f\colon \mathbb{F}_p^{n}\to\{0,1\}$
is restricted $3$-AP free function of density $\alpha$, then we may find a restricted $3$-AP function
$f\colon \mathbb{F}_p^{n'}\to\{0,1\}$ with significantly larger density and $n'\geq \gamma(\alpha) n$.
\begin{lemma}\label{lem:density_inc_step}
  For all $\alpha>0$ there is $\alpha'>0$ and $\gamma>0$ such that the following holds.
  Suppose $f\colon \mathbb{F}_p^n\to\{0,1\}$ is a function with average $\alpha$ that is
  restricted $3$-AP free. Then there exists $n'\geq \gamma n$ and $g\colon\mathbb{F}_p^{n'} \to \{0,1\}$
  which is restricted $3$-AP free and $\E[g]\geq \alpha + \alpha'$.

  Quantitatively, we have $\alpha' = \gamma \geq {\sf exp}(-C{\sf exp}(1/\alpha^C))$
  where $C = C(p)>0$ is a constant depending only on $p$.
\end{lemma}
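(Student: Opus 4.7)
The plan is to convert restricted $3$-AP-freeness into an analytic correlation, apply the stability theorem to pin down structure, and then use specialized changes of basis to collapse that structure into a density bump.

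First, I would count restricted $3$-APs. Since $f$ is restricted $3$-AP free, only the $a=\vec{0}$ diagonal contributes, giving
\[
\Expect{(x,y,z)\sim\mu^{\otimes n}}{f(x)f(y)f(z)}\leq 3^{-n}\alpha.
\]
Writing $\tilde f=f-\alpha$, one obtains $\E_{\mu^{\otimes n}}[\tilde f(x)f(y)f(z)]\leq 3^{-n}\alpha-\alpha\inner{f}{\mathrm{T}^{\otimes n}f}$, where $\mathrm{T}$ is the connected Markov chain on $\mathbb{F}_p$ with step uniform in $\{0,1,2\}$. I would then apply a dichotomy via Lemma~\ref{lem:small_wt_to_neg}: either $W_{\leq d}[f-\alpha]\geq \Omega(\alpha^4)$ for $d=O(\log(1/\alpha))$, in which case Lemma~\ref{lem:rest_to_bump} already produces a density bump on a restriction leaving an $\Omega(1/d)$ fraction of variables alive and we are done; or $\inner{f}{\mathrm{T}^{\otimes n}f}\geq \tfrac{4}{5}\alpha^2$, yielding $|\E_{\mu^{\otimes n}}[\tilde f(x)f(y)f(z)]|\geq \alpha^3/2$ as soon as $n$ is sufficiently large.

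Second, I would invoke Theorem~\ref{thm:stab} with $\eps=\alpha^3/2$. By Claim~\ref{claim:mu_pairwise} and Claim~\ref{claim:mu_not_Z}, $\mu$ is pairwise connected and admits no non-trivial Abelian embedding into $(\mathbb{Z},+)$, so the theorem produces a product function $P\in\mathcal{P}(H,n,\mathbb{F}_p)$ for an Abelian group $H$ of size $r=O_p(1)$, and a low-degree function $L$, with $|\inner{\tilde f}{P\cdot L}|\geq \delta= 2^{-\mathrm{poly}(1/\alpha)}$. To eliminate $L$, I would apply Lemma~\ref{lem:find_corr_restrict}: a random restriction shows that with probability at least $\delta^2/(2e)$ the restricted $\tilde{f}$ is correlated with the restricted $P$ at level $\delta/\sqrt{2e}$. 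A Markov-type averaging on $\E[f|_R]$ (whose mean over $R$ is $\alpha$) shows that unless we already find a restriction with density bump $\Omega(\sqrt\delta)$ (in which case we stop), we may choose $R$ simultaneously enjoying the correlation event and satisfying $\E[f|_R]$ within $o(\eps)$ of $\alpha$.

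Third, with $F:=f|_R-\alpha$ correlated with a product function $Q\in\mathcal{P}(H,n',\mathbb{F}_p)$ at level $\eps':=\delta/\sqrt{2e}$, I would apply Lemma~\ref{lem:list_decode} with tiny $\delta',\beta$. Case~1 is an immediate density bump. In Case~2, I would exploit the product structure via pigeonhole: writing $Q=c\prod_i \chi_i(\sigma(x_i))$ with $\chi_i\in\hat H$ and $|\hat H|=r$, some character $\chi$ occurs on a coordinate set $J\subseteq[n']$ of size $\geq n'/r$. Partition $J$ into $m\geq n'/r^2$ disjoint blocks $I_1,\ldots,I_m$ of size exactly $r$, set $v_k=\mathbf{1}_{I_k}$, and complete to a specialized basis with $u_j$'s supported outside $\bigcup_k I_k$. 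By the calculation in the proof overview, after the specialized change of basis and $z$-restriction, on each block $I_k$ the product telescopes to $\chi(\sigma(x_k))^{r}=1$ via Fact~\ref{fact:trivial}, while on the remaining coordinates it depends only on $z$. Hence $Q^\sharp_{[n']\setminus[n'-n'']\to z}\equiv c(z)$ with $|c(z)|=1$. Taking $I_z$ to be the full set of remaining coordinates in Case~2 of Lemma~\ref{lem:list_decode} yields $|\E[F^\sharp_{[n'-n'']\to z}]|\geq \eps'/2$ with probability $\geq 1-\delta'$ over $z$. Combined with $\E[F^\sharp]\geq -\sqrt\beta$ via a first-moment argument (splitting by the sign of $\E$), for suitably small $\beta,\delta'$ some $z$ gives $\E[F^\sharp_{[n'-n'']\to z}]\geq \eps'/4$, i.e., a density bump of $\eps'/4$ for $f$. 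Restricted $3$-AP-freeness is preserved throughout by Claim~\ref{claim:special_pres_free}.

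The chief obstacle is the quantitative bookkeeping: Theorem~\ref{thm:stab} delivers $\delta=2^{-\mathrm{poly}(1/\alpha)}$, which feeds into Lemma~\ref{lem:list_decode} as $\eps'$ and forces $\gamma\geq (p^{-100r}r^{-10p})^{O(1/(\delta'\eps'))}$ with $\delta'\sim \eps'$; tracing the composition yields $\alpha'=\gamma\geq \exp(-C\exp(1/\alpha^C))$ as claimed.
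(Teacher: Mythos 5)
Your overall architecture matches the paper's proof (count trivial 3-APs, dichotomize via Lemma~\ref{lem:small_wt_to_neg} and Lemma~\ref{lem:rest_to_bump}, invoke Theorem~\ref{thm:stab}, strip $L$ with Lemma~\ref{lem:find_corr_restrict}, robustify with Lemma~\ref{lem:list_decode}, then pigeonhole and a specialized change of basis), but there is a genuine gap at the crucial step where you claim the product function becomes constant. You write that one can take $v_k=\mathbf{1}_{I_k}$ and ``complete to a specialized basis with $u_j$'s supported outside $\bigcup_k I_k$.'' This is impossible when $|H|=r\geq 2$: the vectors $v_1,\ldots,v_m$ span only an $m$-dimensional subspace of the $rm$-dimensional coordinate subspace indexed by $\bigcup_k I_k$, so together with vectors supported outside $\bigcup_k I_k$ you span a space of dimension at most $m+(n'-rm)<n'$. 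Any genuine completion forces some $u_j$ to intersect the blocks, and then after the change of basis one has $w_i=x_k+H_i(z)$ with nonzero linear shifts $H_i(z)$ inside the blocks. Consequently the restricted product on a block is $\prod_{i\in I_k}h\bigl(x_k+H_i(z)\bigr)$, which does \emph{not} telescope to $h(x_k)^{r}=1$; it is a product of different shifts of $h$ and is in general a nonconstant function of $x_k$. So your conclusion that $Q^{\sharp}$ restricted on the $z$-part is identically $c(z)$, and hence that the robust correlation becomes a bias of $F^{\sharp}$, is unjustified; this is exactly the difficulty the paper flags in ``addressing the third issue.''

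The paper's proof handles this by calling a block $R_j$ \emph{good} for a given $z$ when $H_i(z)=0$ for all $i\in R_j$ (probability at least $p^{-r}$ per block), arguing that with probability at least $p^{-r}/2$ a constant fraction $\tfrac{p^{-r}}{2}\cdot\tfrac{n'}{r}$ of blocks are good, and then taking $I_z$ to be the union of the \emph{good} blocks only, restricting all remaining live variables by $z'$. This is precisely why Lemma~\ref{lem:list_decode} is formulated with the two-stage restriction and an adversarially chosen set $I_z$ of size only $p^{-100r}n''$, rather than with a plain restriction of the $z$-part as in your use (where you take $I_z$ to be all live coordinates, making the second stage vacuous). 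On the good blocks the shifts vanish, the product collapses to $h(x_j)^{r}=1$ by Fact~\ref{fact:trivial}, and the constancy of $P^{\sharp}$ on the surviving variables goes through; the subsequent Markov-inequality bookkeeping (controlling $\E[f^{\sharp}\circ M|_{J\to z}]$ and then over $z'$) ensures the resulting bias is a density increase rather than a decrease. Your proposal needs this good-block mechanism (or an equivalent way to neutralize the shifts $H_i(z)$) to be a complete proof; the rest of your outline, including the quantitative tracing, is in line with the paper.
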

\begin{proof}
Let $f\colon \mathbb{F}_p^n\to\{0,1\}$ be the indicator function of $A$. As $A$ is restricted $3$-AP free it follows that
  the only triplets $x$, $x+a$ and $x+2a$ in $A$ where $x\in \mathbb{F}_p^n$ and $a\in\{0,1,2\}^n$ are such that $a=\vec{0}$
  and $x\in A$, hence
  \[
  \Expect{x\in\mathbb{F}_p^n,a\in\{0,1,2\}^n}{f(x)f(x+a)f(x+2a)} = 3^{-n} \alpha.
  \]
  Consider the expectation $W = \Expect{x\in\mathbb{F}_p^n,a\in\{0,1,2\}^n}{f(x+a)f(x+2a)}$.
  If $W\leq \alpha^2/100$, then by Lemma~\ref{lem:small_wt_to_neg} for $d = O_p(\log(1/\alpha))$
  we have that $W_{\leq d}[f-\alpha]\geq \Omega(\alpha^4)$. By Lemma~\ref{lem:rest_to_bump} it follows
  that there is $I$ of size at least $n/2d$ and a restriction $y\in\mathbb{F}_p^{\overline{I}}$ such
  that $\E[f_{\overline{I}\rightarrow y}]\geq \alpha + \Omega(\alpha^4)$, and we found a restriction as
  required in the statement of the lemma.

  We henceforth assume that $W\geq \alpha^2/100$. Thus,
  \begin{equation}\label{eq:density_increment1}
  \card{\Expect{x\in\mathbb{F}_p^n,a\in\{0,1,2\}^n}{(f(x)-\alpha)f(x+a)f(x+2a)}}
  =\card{3^{-n}\alpha - \alpha W}\geq \alpha^3/200.
  \end{equation}
  By Claim~\ref{claim:mu_pairwise} the distribution $\mu$ is pairwise connected and by Claim~\ref{claim:mu_not_Z} it doesn't admit any non-trivial $(\mathbb{Z},+)$ embeddings,
  so we may apply Theorem~\ref{thm:stab}.
  Let $r$ and $(H,+)$ be from that theorem (that only depends on $p$); without loss of generality we assume that $\card{H} = r$, otherwise we decrease $r$.
  We take $D$ and $\delta>0$ from Theorem~\ref{thm:stab} for $\eps = \frac{\alpha^3}{200}$. Thus, by~\eqref{eq:density_increment1} it follows
  that there is $P\in \mathcal{P}(H, n, \mathbb{F}_p)$ and $L\colon \Sigma^n\to\mathbb{C}$ of degree at most $D$ and $2$-norm at most $1$ such that
  $\card{\inner{f-\alpha}{P\cdot L}}\geq \delta$. Denote $\tilde{f} = f-\alpha$.

  Take $\beta_0$ and $\gamma$ from Theorem~\ref{lem:list_decode} for the parameters $r$ and the parameters ``$\eps$'' there being
  $\eps'=\delta/\sqrt{2e}$ and ``$\delta$''
  there being both equal to $p^{-10r}/10r$.
  Let $\eta = \min(\frac{\beta_0^2}{1000},\frac{\delta^{100}}{100})$, and let $(I,y)$ be a $1/2d$ random restriction according to $\mu$. Let $E$ be the event that
  $\E[f_{\overline{I}\rightarrow y}]\leq \alpha - \eta$. If $\Prob{}{E}\geq \eta$, then
  \[
  \Prob{}{\E[f_{\overline{I}\rightarrow y}]\geq \alpha + \eta^2/2}\geq \frac{\eta^2}{2},
  \]
  and we may find a restriction as needed in the lemma. Henceforth, we assume that $\Prob{}{E}\leq \eta$. By Lemma~\ref{lem:find_corr_restrict}
  we get that $\card{\inner{\tilde{f}_{\overline{I}\rightarrow y}}{P_{\overline{I}\rightarrow y}}}\geq \frac{\delta}{\sqrt{2e}}$ with probability at least $\frac{\delta^2}{2e}$,
  and it follows that with probability at least $\frac{\delta^2}{2e}-\eta \geq \frac{\delta^2}{4e}$ we have that $(I, y)$ satisfies that
  \[
  \card{\inner{\tilde{f}_{\overline{I}\rightarrow y}}{P_{\overline{I}\rightarrow y}}}\geq \eps',
  \qquad
  \E[f_{\overline{I}\rightarrow y}]\geq \alpha - \eta.
  \]
  Thus, we may find $I$ of size at least $n/2D$ and $y$ satisfying both of these conditions.
  We denote $f' = f_{\overline{I}\rightarrow y}$, $P' = P_{\overline{I}\rightarrow y}$, $n' = \card{I}$ and $\tilde{f}' = \tilde{f}_{\overline{I}\rightarrow y}$.
  Thus, we get that
  \begin{equation}\label{eq:density_increment2}
    \card{\inner{f'}{P'}}\geq \eps',
    \qquad
    \E[\tilde{f}']\geq -\eta.
  \end{equation}

  We now apply Lemma~\ref{lem:list_decode} on $\tilde{f}'$ and $P'$ and $\beta = \min(\beta_0^2/100, 2\eta)$, and there are two cases depending on which of the items therein holds.
  \paragraph{The case the first item holds.}
  If the first item holds, then we may find $f^{\sharp}$ which is a result of applying specialized changes of basis and
  restrictions of the $z$ part on $\tilde{f}'$ such that $\E[f^{\sharp}]\geq \E[\tilde{f}'] + \beta \geq \beta - \eta\geq \beta/2$,
  and so $g = \alpha + \tilde{f}'$ is a result of applying specialized changes of basis and
  restrictions of the $z$ part on $f$, and $\E[g]\geq \alpha + \beta/2$. It follows by Claim~\ref{claim:special_pres_free}
  that $g$ is a function as required in the statement of the lemma.

  \paragraph{The case the second item holds.} If the second item holds, then we may find $f^{\sharp}$ and $P^{\sharp}$ which are a result of applying specialized changes of basis and
  restrictions of the $z$ part on $\tilde{f}'$ and $P'$, so that $\E[f^{\sharp}]\geq \E[\tilde{f}'] - \sqrt{\beta}\geq -3\sqrt{\eta}$ and the second
  item therein holds. Denote the number of variables depend on by $n^{\sharp}$, so that $n^{\sharp}\geq \gamma n$. As $\mathcal{P}(H,n,\mathbb{F}_p)$
  is closed under specialized changes it follows that $P^{\sharp}\in \mathcal{P}(H, n^{\sharp}, \mathbb{F}_p)$, and so we may write
  (note that we may assume the constant to be $1$, as otherwise we may multiply by its complex conjugate and get that the absolute value
  of the correlation with $f$ remains the same)
  \[
  P^{\sharp}(x) = \prod\limits_{i=1}^{n^{\sharp}} h_i(x_i)
  \]
  where $h_i\colon \Sigma\to \{a~|~a^{r} = 1\}$. The total number of distinct tuples
  $h_i$'s is $p^{r}$, hence
  by the pigeonhole principle we may find $h$
  such that $h_i = h$ for at least $n' = \frac{n^{\sharp}}{p^r}$
  of the coordinates, and we let this set of coordinates be denoted by $R$.
  Without loss of generality we assume that $\card{R}$ is divisible by $r$, as otherwise we may drop from it at most $r-1$
  elements to make it divisible by $r$. Also, without loss of generality assume that $R = [n']$.

  Next, we set up a specialized change of basis. Towards this end, we partition $R$ into $R_1,\ldots, R_{n'/r}$ where each set is of
  size precisely $r$, and choose for each $i=1,\ldots, n'/r$ a vector $v_i = 1_{R_i}\in \{0,1\}^{n^{\sharp}}$, and then complete
  $\{v_1,\ldots,v_{n'/r}\}$ into a basis of $\mathbb{F}_p^{n^{\sharp}}$ by adding vectors $u_1,\ldots,u_{n^{\sharp} - n'/r}$.
  Denote $J = [n^{\sharp}]\setminus [n'/r]$, and write $w = M_{\vec{u},\vec{v}}(x,z)$ where $M_{\vec{u},\vec{v}}$ is the specialized
  change of basis matrix corresponding to the basis $\vec{u},\vec{v}$ we constructed. We are going to randomly restrict the $z$
  part according to $z\sim \mathbb{F}_p^{n^{\sharp}-n'/r}$, and towards that end we first note that
  for $i=1,\ldots, n'$, we have that $w_i = x_{j} + H_i(z)$ where $j$ is the unique index such that $i\in R_j$, and
  $H_i(z)$ is a linear function in the $z$'s. We say $j\in [n'/r]$ is good if, for our choice of $z$, it holds that $H_i(z) = 0$ for all $i\in R_j$.
  Note that for each $j$, the probability that $j$ is good under the choice of $z$ is at least $p^{-r}$, and so the expectation of the number
  of good $j$'s is at least $p^{-r} \frac{n'}{r}$. Thus, by an averaging argument the probability that the number of good $j$'s is at least $\frac{p^{-r}}{2}\frac{n'}{r}$
  is at least $p^{-r}/2$, and we denote this event by $E$. For each $z$ we choose the set $I_z = \cup_{j\text{ good}} R_j$.

  We now choose $z\sim \mathbb{F}_p^{n^{\sharp}-n'}$ and $z'\sim\mathbb{F}_p^{n'}$.
  Using the second item of Theorem~\ref{lem:list_decode} now with the choice of $I_z$, we get that
  \begin{equation}\label{eq:second_item_holds}
  \card{
  \inner
  {f^{\sharp}\circ M_{\vec{u},\vec{v}}|_{\substack{J\rightarrow z\\ \overline{I_z}\rightarrow z'_{\overline{I_z}}}}}
  {P^{\sharp}\circ M_{\vec{u},\vec{v}}|_{\substack{J\rightarrow z\\ \overline{I_z}\rightarrow z'_{\overline{I_z}}}}}
  }\geq \frac{\eps'}{2}
  \end{equation}
  with probability at least $1-\frac{p^{-10r}}{10r}$. Thus, with probability at
  least $1-\frac{p^{-5r}}{\sqrt{10r}}$ over $z$, we have that
  \begin{equation}\label{eq:second_item_holds2}
  \Prob{z'}{\eqref{eq:second_item_holds}\text{ holds}}\geq 1-\frac{p^{-5r}}{\sqrt{10r}}.
  \end{equation}
  As the probability of $E$ is at least $p^{-r}/2$, we
  get that with probability at least $p^{-r}/2-\frac{p^{-5r}}{\sqrt{10r}}\geq p^{-r}/4$ both $E$ and~\eqref{eq:second_item_holds2}
  hold. We call such $z$ good.

  If
  $\E[f^{\sharp}\circ M_{\vec{u},\vec{v}}|_{J\rightarrow z}]\geq \eta$, then we can take $g = \alpha + f^{\sharp}\circ M_{\vec{u},\vec{v}}|_{J\rightarrow z}$
  and get that $g$ is $0,1$ valued function which is restricted $3$-AP free, and $\E[g] \geq \alpha + \eta$ as required in the statement of the lemma.
  We thus assume that $\E[f^{\sharp}\circ M_{\vec{u},\vec{v}}|_{J\rightarrow z}]\leq \eta$ always. By Markov's inequality we thus get that
  \begin{align*}
  \Prob{z}{\E[f^{\sharp}\circ M_{\vec{u},\vec{v}}|_{J\rightarrow z}]\leq -\eta^{1/4}}
  =\Prob{z}{\eta - \E[f^{\sharp}\circ M_{\vec{u},\vec{v}}|_{J\rightarrow z}]\leq \eta+\eta^{1/4}}
  &\leq \frac{\Expect{z}{\eta - \E[f^{\sharp}\circ M_{\vec{u},\vec{v}}|_{J\rightarrow z}]}}{\eta+\eta^{1/4}}\\
  &=\frac{\eta-\E[f^{\sharp}]}{\eta+\eta^{1/4}}\\
  &\leq \frac{4\sqrt{\eta}}{\eta^{1/4}}\\
  &=4\eta^{1/4}.
  \end{align*}
  Hence, with probability at least $p^{-r}/4 - 4\eta^{1/4}\geq p^{-r}/8$ we have that $z$ satisfies the event $E$,
  the inequality~\eqref{eq:second_item_holds2} holds and $\E[f^{\sharp}\circ M_{\vec{u},\vec{v}}|_{J\rightarrow z}]\geq -\eta^{1/4}$.
  We call such $z$ excellent.

  Let $z$ be excellent, and let $z'\in \mathbb{F}_p^{n'}$.
  If $\E[f^{\sharp}\circ M_{\vec{u},\vec{v}}|_{\substack{J\rightarrow z\\ \overline{I_z}\rightarrow z'_{\overline{I_z}}}}]\geq \eta$
  then we are similarly done, hence we assume that $\E[f^{\sharp}\circ M_{\vec{u},\vec{v}}|_{\substack{J\rightarrow z\\ \overline{I_z}\rightarrow z'_{\overline{I_z}}}}]\leq \eta$.
  By Markov's inequality
  \begin{align*}
  \Prob{z'}{\E[f^{\sharp}\circ M_{\vec{u},\vec{v}}|_{\substack{J\rightarrow z\\ \overline{I_z}\rightarrow z'_{\overline{I_z}}}}]\leq - \eta^{1/8}}
  \leq \frac{\E_{z'}[\eta - \E[f^{\sharp}\circ M_{\vec{u},\vec{v}}|_{\substack{J\rightarrow z\\ \overline{I_z}\rightarrow z'_{\overline{I_z}}}}]]}{\eta+\eta^{1/8}}
  &=\frac{\eta - \E[f^{\sharp}\circ M_{\vec{u},\vec{v}}|_{J\rightarrow z}]}{\eta+\eta^{1/8}}\\
  &\leq
  \frac{\eta + \eta^{1/4}}{\eta+\eta^{1/8}},
  \end{align*}
  which is at most $2\eta^{1/8}$. Thus, with probability at least
  $1-\frac{p^{-5r}}{\sqrt{10r}}-2\eta^{1/8} \geq 1/2$ we have
  that $z'$ satisfies~\eqref{eq:second_item_holds} and
  $\E[f^{\sharp}\circ M_{\vec{u},\vec{v}}|_{\substack{J\rightarrow z\\ \overline{I_z}\rightarrow z'_{\overline{I_z}}}}]\leq - \eta^{1/8}$.

  Overall, we get that with probability at least $\frac{p^{-r}}{8}\cdot \frac{1}{2}$ we have that $z$ is excellent,~\eqref{eq:second_item_holds}
  holds and $\E[f^{\sharp}\circ M_{\vec{u},\vec{v}}|_{\substack{J\rightarrow z\\ \overline{I_z}\rightarrow z'_{\overline{I_z}}}}]\geq - \eta^{1/8}$.
  We fix such $z$ and $z'$. We inspect $P^{\sharp}\circ M_{\vec{u},\vec{v}}|_{\substack{J\rightarrow z\\ \overline{I_z}\rightarrow z'_{\overline{I_z}}}}$ and
  see that it is constant. Indeed, its value on an input $x$ is $P^{\sharp}(w)$ where
  $w=M_{\vec{u},\vec{v}}(x,z',z)$. On coordinates $i\in J$ or $i\not\in I_z$ the value of $w_i$ depends only on
  $z$ and $z'$ by construction and hence is fixed, and on coordinates $i\in I_z$ the value of $w_i$ is $x_j$ where
  $j$ is the unique index such that $i\in R_j$. It follows that
  \[
  P^{\sharp}(w)
  =C(z,z')\prod\limits_{i\in I_z}h(w_i)
  =C(z,z')\prod\limits_{j\text{ good}}\prod\limits_{i\in R_j}h(x_j)
  =C(z,z')\prod\limits_{j\text{ good}}h(x_j)^{\card{R_j}}
  =C(z,z'),
  \]
  where the last transition follows as $\card{R_j} = r$ and by Fact~\ref{fact:trivial}.
  Here, $C(z,z')$ is a complex number of absolute value $1$ depending only on $z,z'$.
  Thus, the fact that~\eqref{eq:second_item_holds} holds means that
  $\card{\E[f^{\sharp}\circ M_{\vec{u},\vec{v}}|_{\substack{J\rightarrow z\\ \overline{I_z}\rightarrow z'_{\overline{I_z}}}}]}\geq \eps'/2$.
  As we know that $\E[f^{\sharp}\circ M_{\vec{u},\vec{v}}|_{\substack{J\rightarrow z\\ \overline{I_z}\rightarrow z'_{\overline{I_z}}}}]\geq -\eta^{1/8}$
  and  $\eta^{1/8} < \eps'/2$ by choice of parameters, it follows that
  $\E[f^{\sharp}\circ M_{\vec{u},\vec{v}}|_{\substack{J\rightarrow z\\ \overline{I_z}\rightarrow z'_{\overline{I_z}}}}] \geq \eps'/2$.
  Thus, $g = \alpha+f^{\sharp}\circ M_{\vec{u},\vec{v}}|_{\substack{J\rightarrow z\\ \overline{I_z}\rightarrow z'_{\overline{I_z}}}}$
  is a Boolean function which is restricted $3$-AP free, and $\E[g]\geq \alpha + \eps'$, and the proof is concluded.
\end{proof}

\subsection{Iterating the Density Increment Argument}
We are now ready to prove Theorem~\ref{thm:main_el}, which clearly follows from Theorme~\ref{thm:main_el_restated} below.
\begin{thm}\label{thm:main_el_restated}
  For all $p$ there is $C = C(p)>0$ such that for all
  $\alpha>0$ there is $N = {\sf exp}({\sf exp}({\sf exp}(C/\alpha^C))$
  such that the following holds for $n\geq N$.
  If $f\colon \mathbb{F}_p^n\to\{0,1\}$ is a function with average $\alpha$, then $f$ contains
  a restricted $3$-AP.
\end{thm}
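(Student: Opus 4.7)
The plan is to prove Theorem~\ref{thm:main_el_restated} by iterating the basic density increment lemma (Lemma~\ref{lem:density_inc_step}). I assume toward contradiction that $f_0 := f\colon \mathbb{F}_p^n \to \{0,1\}$ has density $\alpha_0 := \alpha$ and is restricted $3$-AP free, and inductively define a sequence of restricted $3$-AP free functions $f_k\colon \mathbb{F}_p^{n_k}\to\{0,1\}$ with densities $\alpha_k$, where $f_{k+1}$ is produced by applying Lemma~\ref{lem:density_inc_step} to $f_k$. The lemma yields $n_{k+1} \geq \gamma(\alpha_k)\cdot n_k$ and $\alpha_{k+1} \geq \alpha_k + \alpha'(\alpha_k)$, where $\alpha'(\beta), \gamma(\beta) \geq \exp(-C\exp(1/\beta^C))$ for a constant $C = C(p)$, and both are non-decreasing in $\beta$.

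Since $\alpha_k$ is a non-decreasing sequence in $[0,1]$ and the bounds on $\alpha'$ and $\gamma$ improve as $\beta$ grows, I replace $\alpha'(\alpha_k)$ and $\gamma(\alpha_k)$ by their values at $\beta = \alpha$, setting $\eta := \exp(-C\exp(1/\alpha^C))$. The density then grows by at least $\eta$ per step, so the process must terminate (because $\alpha_k$ would otherwise exceed $1$) after at most
\[
T := \lceil 1/\eta \rceil \leq \exp\!\left(2C\exp(1/\alpha^C)\right)
\]
steps --- provided that $n_k$ remains large enough for Lemma~\ref{lem:density_inc_step} to apply (whose quantitative statement requires roughly $n_k \geq 1/\gamma(\alpha_k)$ so that $n_{k+1} \geq 1$). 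Noting
\[
n_T \geq \gamma(\alpha)^T \cdot n \geq n\cdot \exp\!\left(-T\cdot C\exp(1/\alpha^C)\right) \geq n\cdot \exp\!\left(-\exp\!\left(3C\exp(1/\alpha^C)\right)\right),
\]
choosing $N = \exp(\exp(\exp(C'/\alpha^{C'})))$ for a sufficiently large constant $C' = C'(p)$ ensures that $n_T$ exceeds any constant threshold required by Lemma~\ref{lem:density_inc_step}, so the iteration runs for $T$ steps and produces the contradiction $\alpha_T > 1$.

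I do not expect any substantial obstacle: the density increment in Lemma~\ref{lem:density_inc_step} already contains essentially all the mathematical content, and this final step is pure parameter tracking. The only subtle points to check are (1) that $\alpha'(\beta), \gamma(\beta)$ may be taken non-decreasing in $\beta$, so that the worst-case value at $\alpha_0 = \alpha$ controls every subsequent step uniformly in $k$, and (2) that compounding the $\exp(\exp(1/\alpha^C))$ iterations with the $\exp(\exp(1/\alpha^C))$ dimension reduction per iteration yields a triple-exponential threshold $\exp(\exp(\exp(C'/\alpha^{C'})))$, which matches the form stated in Theorem~\ref{thm:main_el_restated}.
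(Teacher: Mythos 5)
Your proposal is correct and follows essentially the same iterative density-increment approach as the paper, with the same parameter tracking giving the triple-exponential threshold. The only cosmetic difference is in how you close the argument: you run the iteration until the density would exceed $1$, whereas the paper stops once the density reaches $0.99$ on $\tilde n\geq 10$ coordinates and invokes a direct union bound to exhibit a restricted $3$-AP; both variants are valid and rely on the same monotonicity observation for $\alpha'(\cdot),\gamma(\cdot)$.
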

\begin{proof}
  Assume that this is not the case; then by Lemma~\ref{lem:density_inc_step} we may
  find a function $f_1$ on $n_1\geq \gamma n$ coordinates which also does not contain
  restricted $3$-APs and $\E[f_1]\geq \alpha + \alpha'$, where $\gamma$ and $\alpha'$ are
  from Lemma~\ref{lem:density_inc_step}. We now iterate this, and note after
  at most $1/\alpha'$ times, we get a function $\tilde{f}$ with average at least $0.99$
  on $\tilde{n}\geq \gamma^{1/\alpha'} n$ coordinates which is restricted $3$-AP free.
  As $n\geq N$ we get that $\tilde{n}\geq 10$,
  hence it follows from the union bound that $\tilde{f}$ contains at least $0.97 > 2^{-10}$ of the
  tuples $(x,x+a,x+2a)$ where $x\sim\mathbb{F}_p^{\tilde{n}}$ and $a\sim\{0,1,2\}^{\tilde{n}}$.
  In particular, $\tilde{f}$ is not restricted $3$-AP free, and contradiction.
\end{proof}


\section*{Acknowledgments} 
We thank an anonymous for many helpful comments on an earlier version of this paper.

\bibliographystyle{amsplain}
\newcommand{\etalchar}[1]{$^{#1}$}


\begin{dajauthors}
\begin{authorinfo}[pgom]
  Amey Bhangale\\
  Assistant Professor\\
  University of California Riverside\\
  CA, USA\\
  ameyrbh\imageat{}gmail\imagedot{}com\\
  \url{https://sites.google.com/view/amey-bhangale/home}
\end{authorinfo}
\begin{authorinfo}[laci]
  Subhash Khot\\
  Professor\\
  New York University\\
  New York, USA\\
  khot\imageat{}cims\imagedot{}nyu\imagedot{}edu\\
  \url{https://cs.nyu.edu/~khot/}
\end{authorinfo}
\begin{authorinfo}[andy]
  Dor Minzer\\
  Assistant Professor\\
  Massachusetts Institute of Technology\\
  Cambridge, MA, USA\\
  dminzer\imageat{}mit\imagedot{}edu\\
  \url{https://sites.google.com/view/dorminzer/home}
\end{authorinfo}
\end{dajauthors}

\end{document}